\theoremstyle{plain} 
\newtheorem{theorem}{\indent\sc Theorem}[section]
\newtheorem{lemma}[theorem]{\indent\sc Lemma}
\newtheorem{proposition}[theorem]{\indent\sc Proposition}
\theoremstyle{definition} 
\newtheorem{define}[theorem]{\indent\sc Definition}
\newtheorem{remark}[theorem]{\indent\sc Remark}
\begin{document}
\date{} 

\title [Modified defect relations of the Gauss map and the total curvature]{Modified defect relations of the Gauss map and the total curvature of a complete minimal surface}

\author{Pham Hoang Ha}

\keywords{Minimal surface, Gauss map, Modified defect relation, Value distribution theory, Total curvature}

\subjclass[2010]{Primary 53A10; Secondary 53C42, 30D35, 32H30}
\address{
	Department of Mathematics,  \endgraf
	Hanoi National University of Education, \endgraf
	136, XuanThuy str., Hanoi, \endgraf
	Vietnam
}
\email{ha.ph@hnue.edu.vn}

\begin{abstract} 
In this article, we propose some conditions on the modified defect relations of the Gauss map of a complete minimal surface $M$ to show that $M$ has finite total curvature. 
\end{abstract}
\maketitle
\section{Introduction}
In 1988, Fujimoto (\cite{Fu88}) proved Nirenberg's conjecture that if $M$ is a complete non-flat minimal surface in $\mathbb R^3,$ then its Gauss map can omit at most 4 points, and there are  a number of examples showing that the bound is sharp (see \cite[p.72-74]{O86}). He (\cite{Fu93}) also extended that result to the Gauss maps of complete minimal surfaces in $\mathbb R^m(m>3).$ For the case of a complete minimal surface with finite total curvature in $\mathbb R^3$, Osserman (\cite{O64}) proved its Gauss map can omit at most 3 points. We note that a complete minimal surface with finite total curvature to be called an algebraic minimal surface. Many results related to this topic were given (see \cite{Ru}, \cite{Fa}, \cite{JR}, \cite{KKM08}, \cite{HT15} and \cite{H17} for examples). Moreover, Mo and Osserman (\cite{MO}) showed an interesting improvement of Fujimoto's result by proving that a complete minimal surface in $\mathbb R^3$ whose Gauss map assumes five values only a finite number of times has finite total curvature.  After that, Mo (\cite{M}) extended that result to the complete minimal surface in $\mathbb R^m (m>3).$ Recently, the author, Phuong and Thoan (\cite{HPT16}) improved these results by giving some conditions on the ramifications of the Gauss map of a complete minimal surface $M$ in $\mathbb R^m (m \geq 3)$ to show that $M$ has finite total curvature.

On the other hand, in 1983, Fujimoto (\cite{Fu83}) introduced the non-intergrated defect relations for the Gauss map of a complete minimal surface which are similar to the defect relations given by R. Nevanlinna in his value distribution theory. After that, he also showed the modified defect relations for the Gauss maps of complete minimal surfaces to improve the previous results on the value distribution theory of the Gauss maps of  complete minimal surfaces relating to the omitted-properties, ramification properties (\cite{Fu89},\cite{Fu90}). Recently, the author and Trao (\cite{HT15}), the author (\cite{H17}) studied the non-integrated defect relations for the Gauss map of a complete minimal surface with finite total curvature in $\mathbb R^m(m\geq 3).$ These are the strict improvements of all previous results of Fujimoto on the modified defect relations for the Gauss map of a complete minimal surface with finite total curvature in $\mathbb R^m.$ 

A natural question is whether we may show a relation between of
the modified defect relations of the Gauss map and the total curvature of a complete minimal surface. In this article, we would like to give an affirmative answer for this question. More precisely, we introduce some conditions on the modified defect relations of the Gauss map of a complete minimal surface $M$ to show that $M$ has finite total curvature. 

The article is organized as follows: In Section \ref{s2}, we
recall some notations on the modified defect for a nonconstant holomorphic map of an open Riemann into $\mathbb P^k(\mathbb C).$ After that we introduce two main theorems of this article and using them to give some previous known results on the value-distribution-theoretic properties for the Gauss maps of complete minimal surfaces (Theorem \ref{T1}, \ref{T2}, \ref{T3} and \ref{T4}). In Section \ref{s3}, we give some lemmas which need for the proofs of the main theorems. Specially, we prove the lemma \ref{ML}, which is a generalization of the main lemma of Fujimoto in \cite{Fu90} by insteading the general position condition of hyperplanes by the subgeneral position one. In the last of this section we also show a relation between the classical defect in value distribution theory of meromorphic functions and modified defect. We will complete the main theorems in Section \ref{s4} and \ref{s5}. We present the proofs basing on the manners of the proofs of the main theorems in \cite{M} and \cite{HPT16}.

Finally, the author would like to thank Professors Nguyen Quang Dieu and Yu Kawakami for their valuable comments.
\section{Statements of the main results}\label{s2}
Let $M$ be an open Riemann surface and $f$ a nonconstant holomorphic map of $M$ into $\mathbb P^k(\mathbb C).$ Assume that $f$ has reduced representation $f=(f_0: \cdots : f_k).$ Set $||f|| = (|f_0|^2 +\cdots + |f_k|^2)^{1/2}$ and, for each a hyperplane $H : \overline{a}_{0}w_0+\cdots+ \overline{a}_{k}w_k = 0 $ in $\mathbb P^k(\mathbb C)$ with $|a_0|^2 +\cdots+ |a_k|^2 = 1,$ we define $f(H) := \overline{a}_0f_0  +\cdots+\overline{a}_kf_k.$
\begin{define} \label{d1}
	We define the $S-$defect of $H$ for $f$ by
	$$\delta^{S}_{f,M}(H):= 1 - \inf \{\eta \geq 0; \eta \text{ satisfies condition $(*)_S$}\}.$$
	Here, condition $(*)_S$ means that there exists a $ [-\infty, \infty)-$valued continuous subharmonic function $u\  (\not\equiv - \infty)$ on $M$ satisfying the following conditions:\\
	(C1) $e^{u} \leq ||f||^{\eta},$ \\
	(C2) for each $\xi \in f^{-1}(H) ,$ there exists the limit\\
	$$\lim_{z \rightarrow \xi}(u(z) - \min (\nu_{f(H)}(\xi), k)\log|z-\xi|) \in [-\infty, \infty) ,$$
	where $z$ is a holomorphic local coordinate around $\xi$ and $\nu_{f(H)}$ is the divisor of $f(H).$
\end{define}
\begin{remark}
	We always have that $\eta = 1$ satisfies condition $(*)_S$ with $u = \log |f(H)|.$
\end{remark}
\begin{define}
	\label{d2}
	We define the $H-$defect of $H$ for $f$ by
	$$\delta^H_{f,M}(H):= 1 - \inf \{\eta \geq 0; \eta \text{ satisfies condition $(*)_H$}\}.$$
	Here, condition $(*)_H$ means that there exists a $ [-\infty, \infty)-$valued continuous subharmonic function $u$ on $M$ which is harmonic on $M - f^{-1}(H)$ and satisfies the  conditions (C1) and (C2).
\end{define}
\begin{define} \label{d3}
	We define the $O-$defect of $H$ for $f$ by
	$$\delta^O_{f,M}(H):= 1 - \inf \{\ \dfrac{1}{n}; \ \text{ $f(H)$ has no zero of order less than $n$}\}.$$
\end{define}
\begin{remark}\label{rm1}
	We always have $0\leq \delta^O_{f,M}(H ) \leq \delta^H_{f,M}(H)\leq \delta^S_{f,M}(H)\leq 1.$
\end{remark}
\begin{define}
	One says that $f$ is ramified over a hyperplane $H$ in $\mathbb P^{k}(\mathbb C)$ {\it with multiplicity at least} $e$ if all the zeros of the function $f(H)$ have orders at least $e.$ If the image of $f$ omits $H,$ one will say that $f$ is {\it ramified over H with multiplicity }$\infty.$
\end{define}
\begin{remark} \label{rm2}
	If  $f$ is ramified over a hyperplane $H$ in $\mathbb P^{k}(\mathbb C)$ with multiplicity at least $n,$ then $\delta^S_{f,M}(H) \geq \delta^H_{f,M}(H) \geq \delta^O_{f,M}(H ) \geq 1 - \dfrac{1}{n}.$ In particular, if $f^{-1}(H) = \emptyset,$ then $\delta^O_{f,M}(H ) = 1.$ 
\end{remark}

\indent Let  $x=(x_0, \cdots, x_{m-1}): M \to \mathbb R^m$ be  a  (smooth,  oriented)  minimal  surface  immersed  in $\mathbb R^m$. Then  $M$ has the structure of a Riemann surface and any local isothermal coordinate $(\xi_1, \xi_2)$ of $M$ gives a local holomorphic coordinate   $z =\xi_1+\sqrt{-1}\xi_2.$ The (generalized) Gauss  map  of  $x$  is  defined  to  be
\begin{align*}
	g: M \to Q_{m-2}(\mathbb C) \subset \mathbb P^{m-1}(\mathbb C), g(z) =(\dfrac{\partial x_0}{\partial z}:\cdots:\dfrac{\partial x_{m-1}}{\partial z}),
\end{align*}          
where 
\begin{align*}
	Q_{m-2}(\mathbb C)=\{(w_0: \cdots : w_{m-1})| w_0^2 + \cdots + w_{m-1}^2 = 0\} \subset \mathbb P^{m-1}(\mathbb C).
\end{align*}   
By the  assumption  of  minimality  of  $M, g$  is  a holomorphic  map  of  $M$  into
$Q_{m-2}(\mathbb C).$ 

In this article, we would like to study the relations between $H-$ defect relations for the Gauss maps with the total curvature of minimal surfaces in  $\mathbb R^m.$  In particular, we will prove the followings.\\

{\bf Main theorem 1.}\ {\it
		Let $M$ be a complete minimal surface in $\mathbb R^m$ and $K$ be a compact subset in $M.$ Set $A:= M \setminus K.$ Let $H_1,...,H_q$ be hyperplanes in $ \mathbb P^{m-1}(\mathbb C)$ located in $N$-subgeneral position $( q > 2N - k + 1, N \geq m-1).$ Assume that the generalized Gauss map $g$ of $M$ is $k-$non-degenerate (that is $g(M)$ is contained in a $k-$dimensional linear subspace in $ \mathbb P^{m-1}(\mathbb C)$, but none of lower dimension), $1\leq k \leq m-1,$ and 
	\begin{equation*}
		\sum_{j=1}^q\delta^H_{g,A}(H_j) > (k+1)(N-\dfrac{k}{2})+(N+1)
		\end{equation*} 
	then $M$ has finite total curvature. }

When $m=3,$ we can identify $\mathbb Q_1(\mathbb C)$ with $\mathbb P^1(\mathbb C).$ So we can get a better result as the following:\\

{\bf Main theorem 2.}\ {\it
	Let $M$ be a complete minimal surface in $\mathbb R^3$ and $K$ be a compact subset in $M.$ Set $A:= M \setminus K.$ Let $a_1,...,a_q$ be $q$ distinct points in $ \mathbb P^{1}(\mathbb C)$. Assume that the  Gauss map $g$ of $M$ satisfies 
	\begin{equation*}
		\sum_{j=1}^q\delta^H_{g,A}(a_j) > 4
	\end{equation*} 
	then $M$ has finite total curvature. }

As the corollaries of the main theorems we can get the following theorems: 
\begin{theorem} [{\cite[Theorem 1]{HPT16}}] \label{T1}
	Let $M$ be a complete minimal surface in $\mathbb R^m$ and $K$ be a compact subset in $M.$ Assume that the generalized Gauss map $g$ of $M$ is $k-$non-degenerate, $1\leq k \leq m-1$. If there are $q$ hyperplanes  $\{H_j\}_{j=1}^q$ located in $N$-subgeneral position in $ \mathbb P^{m-1}(\mathbb C), (N \geq m-1)$ such that $g$ is ramified over $H_j$ with multiplicity at least $m_j$ on $M\setminus K$ for each $j$ and  
	\begin{equation*}
		\sum_{j=1}^q(1 - \frac{k}{m_j})> (k+1)(N-\dfrac{k}{2})+(N+1),
	\end{equation*} 
	then $M$ has finite total curvature.\\
	In particular, if $\{H_j\}_{j=1}^q$ are in general position in $ \mathbb P^{m-1}(\mathbb C)$ and 
	\begin{equation*}
		\sum_{j=1}^q(1 - \frac{m-1}{m_j})> \dfrac{m(m+1)}{2}, 
	\end{equation*} 
	then $M$ must have finite total curvature. 
\end{theorem}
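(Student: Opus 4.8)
The plan is to deduce Theorem~\ref{T1} directly from Main theorem~1 by replacing the ramification hypothesis with a lower bound for each $H$-defect. Recall that, $g$ being $k$-non-degenerate, $\delta^H_{g,A}(H_j)$ is by convention the $H$-defect, with truncation at level $k$, of the trace of $H_j$ on the $k$-dimensional projective subspace spanned by $g(A)$, $g$ being viewed there as a linearly full holomorphic map of $A=M\setminus K$; this is the normalisation under which Main theorem~1 is phrased, so there is nothing extra to do to pass to it.

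First I would record the elementary estimate $\delta^H_{g,A}(H_j)\geq 1-\dfrac{k}{m_j}$ for each $j$ (with $k/\infty:=0$). If $g$ omits $H_j$ on $A$ this is immediate, since $u\equiv 0$ satisfies $(*)_H$ with $\eta=0$, giving $\delta^H_{g,A}(H_j)=1$. If instead $g$ is ramified over $H_j$ on $A$ with finite multiplicity at least $m_j$, then Remark~\ref{rm2} already gives the stronger bound $\delta^H_{g,A}(H_j)\geq 1-\tfrac{1}{m_j}\geq 1-\tfrac{k}{m_j}$ (using $k\geq 1$); alternatively one checks directly that $u:=\tfrac{k}{m_j}\log|g(H_j)|$ satisfies $(*)_H$ with $\eta=k/m_j$. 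Indeed this $u$ is a continuous $[-\infty,\infty)$-valued function, subharmonic on $A$ and harmonic on $A\setminus g^{-1}(H_j)$; it satisfies (C1) because $e^{u}=|g(H_j)|^{k/m_j}\leq\|g\|^{k/m_j}$; and at $\xi\in g^{-1}(H_j)$, writing $\nu:=\nu_{g(H_j)}(\xi)\geq m_j$, one has $u(z)-\min(\nu,k)\log|z-\xi|=\bigl(\tfrac{k\nu}{m_j}-\min(\nu,k)\bigr)\log|z-\xi|+O(1)$ near $\xi$, and the coefficient $\tfrac{k\nu}{m_j}-\min(\nu,k)\geq k-\min(\nu,k)\geq 0$ forces the limit to lie in $[-\infty,\infty)$, which is (C2).

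Next I would sum and invoke Main theorem~1:
\[
\sum_{j=1}^{q}\delta^H_{g,A}(H_j)\ \geq\ \sum_{j=1}^{q}\Bigl(1-\frac{k}{m_j}\Bigr)\ >\ (k+1)\Bigl(N-\frac{k}{2}\Bigr)+(N+1),
\]
which is exactly the defect hypothesis of Main theorem~1. Its other numerical requirement $q>2N-k+1$ is automatic: since $\delta^H_{g,A}(H_j)\leq 1$ for every $j$, the display forces $q>(k+1)\bigl(N-\tfrac{k}{2}\bigr)+(N+1)$, and
\[
(k+1)\Bigl(N-\frac{k}{2}\Bigr)+(N+1)-(2N-k+1)=\frac{k}{2}\bigl(2N-k+1\bigr)\ \geq\ 0
\]
since $N\geq m-1\geq k\geq 1$. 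As $N\geq m-1$ and $g$ is $k$-non-degenerate by hypothesis, Main theorem~1 now applies and $M$ has finite total curvature.

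Finally, for the ``in particular'' statement I would take $N=m-1$, because $q$ hyperplanes in general position in $\mathbb P^{m-1}(\mathbb C)$ lie in $(m-1)$-subgeneral position. Then the threshold $(k+1)\bigl(N-\tfrac{k}{2}\bigr)+(N+1)$ equals $(k+1)\bigl(m-1-\tfrac{k}{2}\bigr)+m$, and a short computation shows $(k+1)\bigl(m-1-\tfrac{k}{2}\bigr)+m\leq\tfrac{m(m+1)}{2}$ for every integer $k$ with $1\leq k\leq m-1$: as a function of $k$ it is a downward parabola taking the value $\tfrac{m(m+1)}{2}$ at both $k=m-2$ and $k=m-1$. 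Since also $1-\tfrac{m-1}{m_j}\leq 1-\tfrac{k}{m_j}$, the hypothesis $\sum_{j=1}^{q}\bigl(1-\tfrac{m-1}{m_j}\bigr)>\tfrac{m(m+1)}{2}$ yields $\sum_{j=1}^{q}\bigl(1-\tfrac{k}{m_j}\bigr)>(k+1)\bigl(m-1-\tfrac{k}{2}\bigr)+m$, so the first part applies. I do not expect a genuine obstacle in any of this: the analytic substance lies entirely in Main theorem~1, and the only points needing care are the level-$k$ truncation in the defect estimate (checking (C2)) and the two elementary numerical inequalities above.
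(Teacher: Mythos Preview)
Your proposal is correct and follows exactly the route the paper takes: the paper states Theorem~\ref{T1} as an immediate corollary of Main theorem~1, with the passage from ramification to the defect bound $\delta^H_{g,A}(H_j)\geq 1-k/m_j$ left implicit (it follows at once from Remark~\ref{rm2}, which gives the even stronger $\delta^H\geq 1-1/m_j$). Your write-up simply fills in the details the paper omits---the explicit check of $(*)_H$ for $\eta=k/m_j$, the automatic inequality $q>2N-k+1$, and the parabola computation for the ``in particular'' clause---none of which the paper spells out.
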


\begin{theorem}[{\cite[Theorem 2]{HPT16}}] \label{T2}
	Let $M$ be a complete minimal surface in $\mathbb R^3$  and $q$ distinct points $a_1, ... , a_{q}$ in $\mathbb P^1(\mathbb C).$ Suppose that the Gauss map $g$ of $M$ is ramifed over $a_j$ with multiplicity at least $m_j$ for each $j=1,\cdots, q$ outside a compact subset $K$ of  $M$. Then $M$ has finite total curvature if 
	\begin{equation*}
	 \sum\limits_{j=1}^{q}\big(1-\frac{1}{m_j}\big) > 4.
	\end{equation*}  
\end{theorem}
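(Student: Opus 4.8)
The plan is to obtain Theorem~\ref{T2} as an immediate consequence of Main theorem 2, the only work being to convert the ramification hypothesis into the lower bound for the sum of $H$-defects that Main theorem 2 requires. Write $A := M \setminus K$, so that $A$ is precisely the open Riemann surface on which Main theorem 2 measures the modified defects of the Gauss map $g$.

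First I would fix $j \in \{1,\dots,q\}$ and regard $a_j$ as a point of $\mathbb P^1(\mathbb C)$, i.e.\ as a hyperplane in $\mathbb P^1(\mathbb C)$ under the identification of $Q_1(\mathbb C)$ with $\mathbb P^1(\mathbb C)$ used throughout the $m=3$ case. By hypothesis every zero of $g(a_j)$ lying in $A$ has order at least $m_j$; in other words $g$ is ramified over $a_j$ with multiplicity at least $m_j$ on $A$ (and if $g$ omits $a_j$ on $A$ we read $m_j = \infty$). Applying Remark~\ref{rm2} with the open Riemann surface $A$ in place of $M$, $f = g$, $H = a_j$ and $n = m_j$, we obtain
\begin{equation*}
\delta^H_{g,A}(a_j) \;\geq\; 1 - \frac{1}{m_j},
\end{equation*}
with the convention $1 - 1/\infty = 1$ when $a_j$ is an omitted value. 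Summing over $j$ and invoking the hypothesis of the theorem,
\begin{equation*}
\sum_{j=1}^q \delta^H_{g,A}(a_j) \;\geq\; \sum_{j=1}^q \Bigl(1 - \frac{1}{m_j}\Bigr) \;>\; 4 .
\end{equation*}

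At this point I would simply quote Main theorem 2: $M$ is a complete minimal surface in $\mathbb R^3$, $K\subset M$ is compact, $a_1,\dots,a_q$ are distinct points of $\mathbb P^1(\mathbb C)$, and the strict inequality $\sum_{j=1}^q \delta^H_{g,A}(a_j) > 4$ just established holds; hence $M$ has finite total curvature, which is exactly the assertion. I do not expect any genuine obstacle in this reduction — all the analytic substance is carried by Main theorem 2 and, through it, by the subgeneral-position main lemma (Lemma~\ref{ML}) together with the completeness and curvature estimates. The only points that need care are bookkeeping: one must check that the phrase ``$g$ is ramified over $a_j$ with multiplicity at least $m_j$ outside the compact set $K$'' is literally the hypothesis of Remark~\ref{rm2} applied on $A = M\setminus K$ (not on $M$), so that the defect bound is for $\delta^H_{g,A}$ rather than $\delta^H_{g,M}$; that the strict inequality for $\sum(1-1/m_j)$ indeed forces the strict inequality for $\sum \delta^H_{g,A}(a_j)$, which is clear from the displayed chain; and that the borderline case $m_j=\infty$ is covered, since there $\delta^O_{g,A}(a_j)=1$ by Remark~\ref{rm2} and hence $\delta^H_{g,A}(a_j)=1 = 1-1/m_j$.
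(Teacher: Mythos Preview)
Your proposal is correct and is exactly the argument the paper has in mind: the paper presents Theorem~\ref{T2} simply as a corollary of Main theorem~2, and the only missing step is precisely the conversion of the ramification hypothesis on $A=M\setminus K$ into the inequality $\delta^H_{g,A}(a_j)\geq 1-1/m_j$ via Remark~\ref{rm2}, followed by summation. There is nothing to add or correct.
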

Now, applying the main theorems we can prove the following results:
\begin{theorem} [{\cite[Theorem 2.8]{Fu90}}] \label{T3}
	For any complete minimal surface $M$ immersed in $\mathbb R^m.$ Assume that the generalized Gauss map $g$ of $M$ is $k-$non-degenerate, $1\leq k \leq m-1$. If there are $q$ hyperplanes  $\{H_j\}_{j=1}^q$ located in $N$-subgeneral position in $ \mathbb P^{m-1}(\mathbb C).$ Then
	\begin{equation*}\label{4}
		\sum_{j=1}^q\delta^H_{g,M}(H_j) \leq (k+1)(N-\dfrac{k}{2})+(N+1)
	\end{equation*}
	In particular, let $\{H_j\}_{j=1}^q$ be $q$ hyperplanes in general position in $\mathbb P^{m-1}(\mathbb C).$ If  
	$$\sum_{j=1}^q\delta^H_{g,M}(H_j) > \dfrac{m(m+1)}{2}$$
	then $M$ is flat, or equivalently, $g$ is constant.
\end{theorem}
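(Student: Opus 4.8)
The plan is to derive Theorem \ref{T3} from Main theorem 1 by contradiction, the one subtlety being that one must handle separately the alternative in which $M$ turns out to have finite total curvature. I would first prove the displayed inequality. Suppose to the contrary that
\[
\sum_{j=1}^{q}\delta^{H}_{g,M}(H_{j})>(k+1)\Big(N-\tfrac{k}{2}\Big)+(N+1).
\]
By Remark \ref{rm1} each $\delta^{H}_{g,M}(H_{j})\leq\delta^{S}_{g,M}(H_{j})\leq 1$, so this forces $q\geq\sum_{j=1}^{q}\delta^{H}_{g,M}(H_{j})>(k+1)(N-\tfrac{k}{2})+(N+1)$. A direct computation gives
\[
(k+1)\Big(N-\tfrac{k}{2}\Big)+(N+1)-(2N-k+1)=\frac{k(2N-k+1)}{2}>0
\]
(using $1\leq k\leq m-1\leq N$), hence $q>2N-k+1$. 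Thus all the hypotheses of Main theorem 1 are met with the compact set $K=\emptyset$ and $A=M$, and we conclude that $M$ has finite total curvature.

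It then remains to rule out this alternative: I must show that a complete minimal surface $M\subset\mathbb{R}^{m}$ of finite total curvature, with nonconstant $k$-non-degenerate Gauss map $g$ and with $H_{1},\dots,H_{q}$ in $N$-subgeneral position, cannot satisfy the displayed strict inequality. Here I would use that $M$ is then conformally $\overline{M}\setminus\{p_{1},\dots,p_{r}\}$ for a compact Riemann surface $\overline{M}$ with $g$ extending holomorphically to $\overline{g}\colon\overline{M}\to\mathbb{P}^{m-1}(\mathbb{C})$, together with the modified defect relation available for algebraic minimal surfaces (see \cite{HT15}, \cite{H17}, and cf.\ \cite{Fu90}), which gives $\sum_{j=1}^{q}\delta^{H}_{g,M}(H_{j})\leq (k+1)(N-\tfrac{k}{2})+(N+1)$, contradicting the assumption. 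This proves the first assertion.

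For the ``in particular'' statement, take the $H_{j}$ in general position, i.e.\ in $(m-1)$-subgeneral position, and suppose $g$ were nonconstant; then $g$ is $k$-non-degenerate for some integer $k\in\{1,\dots,m-1\}$, so the first assertion gives $\sum_{j=1}^{q}\delta^{H}_{g,M}(H_{j})\leq (k+1)(m-1-\tfrac{k}{2})+m$. An elementary computation shows that over the integers $k\in\{1,\dots,m-1\}$ the quantity $(k+1)(m-1-\tfrac{k}{2})+m$ is maximised, with value $\tfrac{m(m+1)}{2}$, at $k=m-2$ and $k=m-1$ (the real maximiser $k=m-\tfrac{3}{2}$ not being an integer). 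Hence $\sum_{j=1}^{q}\delta^{H}_{g,M}(H_{j})\leq\tfrac{m(m+1)}{2}$, a contradiction; so $g$ is constant and $M$ is flat.

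I expect the middle step to be the main obstacle: Main theorem 1 only \emph{produces} the finite-total-curvature case, so one genuinely needs the independent defect relation for algebraic minimal surfaces, and one has to make sure that its constant is $\leq(k+1)(N-\tfrac{k}{2})+(N+1)$ so that it indeed contradicts the strict inequality assumed at the start. The rest is the elementary arithmetic recorded above.
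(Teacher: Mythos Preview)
Your proposal is correct and follows essentially the same route as the paper: assume the strict inequality, apply Main theorem 1 with $K=\emptyset$ to force finite total curvature, and then invoke the defect relation for algebraic minimal surfaces (the paper cites \cite{H17} specifically) to obtain a contradiction. Your write-up is in fact more complete than the paper's: you explicitly verify the hypothesis $q>2N-k+1$ needed to invoke Main theorem 1, and you spell out the ``in particular'' clause by maximising $(k+1)(m-1-\tfrac{k}{2})+m$ over $k\in\{1,\dots,m-1\}$, whereas the paper leaves both of these implicit.
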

\begin{proof}
	Assume that 
		\begin{equation*}
		\sum_{j=1}^q\delta^H_{g,M}(H_j) > (k+1)(N-\dfrac{k}{2})+(N+1). 
	\end{equation*}
Using Main theorem 1 for the case $K = \emptyset$, we get that $M$ has finite total curvature. Now thanks to the main theorem in \cite{H17}, we have
$$\sum_{j=1}^q\delta^H_{g,M}(H_j) \leq \sum_{j=1}^q\delta^S_{g,M}(H_j) \leq (k+1)(N-\dfrac{k}{2})+(N+1).$$ 
This is a contradiction. The theorem \ref{T3} is proved.
	\end{proof}
\begin{theorem} [{\cite[Theorem I]{Fu89}}] \label{T4}
	For any non-flat complete minimal surface $M$ immersed in $\mathbb R^3$ with its Gauss map $g.$ Let $\{a_j\}_{j=1}^q$ be $q$ distinct points in $ \mathbb P^{1}(\mathbb C).$ Then
	\begin{equation*}\label{4}
		\sum_{j=1}^q\delta^H_{g,M}(a_j) \leq 4.
	\end{equation*}
\end{theorem}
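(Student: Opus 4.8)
The plan is to argue by contradiction, reducing everything to Main theorem 2 together with the known non-integrated defect relation for algebraic minimal surfaces in $\mathbb R^3$, in complete analogy with the proof of Theorem \ref{T3} given above. Suppose, contrary to the assertion, that $\sum_{j=1}^{q}\delta^{H}_{g,M}(a_j) > 4$. First I would take $K=\emptyset$, so that $A = M\setminus K = M$, and apply Main theorem 2: its hypothesis is exactly $\sum_{j=1}^{q}\delta^{H}_{g,A}(a_j) > 4$, so $M$ has finite total curvature. (Here $q\ge 5$ is forced automatically, since each defect is at most $1$, so the statement is vacuous otherwise.)

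Now $M$ is a non-flat complete minimal surface of finite total curvature in $\mathbb R^3$, and by Osserman's theory (\cite{O64}) its Gauss map $g$ extends to a nonconstant rational map on the compactification of $M$. One may then invoke the non-integrated defect relation for the Gauss map of a complete minimal surface of finite total curvature proved in \cite{H17}, applied in the normalization appropriate to $\mathbb R^3$ — that is, with $g$ regarded as a $1$-nondegenerate holomorphic map into $\mathbb P^1(\mathbb C)$ via the identification $Q_1(\mathbb C)\cong\mathbb P^1(\mathbb C)$, which is legitimate precisely because $M$ is non-flat. This yields $\sum_{j=1}^{q}\delta^{S}_{g,M}(a_j)\le 4$. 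Combining it with the elementary inequality $\delta^{H}_{g,M}(a_j)\le\delta^{S}_{g,M}(a_j)$ from Remark \ref{rm1} gives
$$\sum_{j=1}^{q}\delta^{H}_{g,M}(a_j)\le\sum_{j=1}^{q}\delta^{S}_{g,M}(a_j)\le 4,$$
contradicting the standing assumption. Hence $\sum_{j=1}^{q}\delta^{H}_{g,M}(a_j)\le 4$, as claimed.

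The entire substance of the argument is carried by Main theorem 2; the deduction above is a soft reduction, so I do not anticipate a serious obstacle. The only point that genuinely needs attention is bookkeeping: one must be sure to quote the finite-total-curvature defect relation of \cite{H17} with the constant that is correct for the $\mathbb R^3$ (equivalently $\mathbb P^1$) case, so that the right-hand side of the displayed inequality is indeed $\le 4$, and one must check that the case $K=\emptyset$ of Main theorem 2 is covered verbatim by its statement (it is, since the empty set is a compact subset of $M$).
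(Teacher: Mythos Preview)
Your proof is correct and follows essentially the same route as the paper's own argument: assume the defect sum exceeds $4$, apply Main theorem~2 with $K=\emptyset$ to force finite total curvature, then invoke the known $S$-defect relation for algebraic minimal surfaces together with $\delta^H\le\delta^S$ to obtain a contradiction. The only cosmetic difference is that the paper cites \cite[Theorem~1]{HT15} (the $\mathbb R^3$ result directly) rather than specializing the $\mathbb R^m$ bound of \cite{H17} as you do.
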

\begin{proof}
	Assume that 
	\begin{equation*}
		\sum_{j=1}^q\delta^H_{g,M}(a_j) > 4. 
	\end{equation*}
	Since Main theorem 2 with $K = \emptyset$, we get that $M$ has finite total curvature. Now we use the theorem 1 in \cite{HT15}, we get
	$$\sum_{j=1}^q\delta^H_{g,M}(a_j) \leq \sum_{j=1}^q\delta^S_{g,M}(a_j) \leq 4.$$ 
	This is a contradiction. The theorem \ref{T4} is proved.
\end{proof}
\section{Auxiliary  lemmas}\label{s3}
Let  $M$  be an open Riemann surface and $ds^2$ a pseudo-metric on $M$, namely, a metric on $M$ with isolated singularities which is locally written as $ds^2=\lambda ^2\left|dz\right|^2$ in terms of a nonnegative real-value function $\lambda$  with mild singularities and a holomorphic local coordinate $z$. We define the divisor of  $ds^2$ by $\nu_{ds}:=\nu_{\lambda}$ for each local expression $ds^2=\lambda ^2 \left|dz\right|^2$, which is globally well-defined on $M$. We say that $ds^2$ is a continuous pseudo-metric if $\nu_{ds}\geq 0$ everywhere.
\begin{define} [see \cite{Fu93}]
	We define the Ricci form of $ds^2$ by 
	$$\mbox{Ric}_{ds^2}:= -dd^c\log\lambda ^2$$
	for each local expression $ds^2=\lambda ^2 \left|dz\right|^2.$ 
\end{define}
\noindent In some cases, a $(1,1)-$form $\Omega$ on $M$ is regarded as a current on $M$ by defining $\Omega(\varphi):= \int_{M}\varphi\Omega$ for each $\varphi \in \mathcal{D},$ where $\mathcal{D}$ denotes the space of all $C^{\infty}$ differentiable functions on $M$ with compact supports.
\begin{define} [see \cite{Fu93}] 
	We say that a continuous pseudo-metric $ds^2$ has strictly negative curvature on $M$ if there is a positive constant $C$ such that 
	$$- \mbox{Ric}_{ds^2}\geq C\cdot\Omega_{ds^2},$$
	where $ \Omega_{ds^2}$ denotes the area form for $ds^2$, namely,
	$$\Omega_{ds^2}:= \lambda ^2 (\sqrt{-1}/2)dz \wedge d\bar{z}$$
	for each local expression $ds^2=\lambda ^2\left| dz\right|^2$.
\end{define}
\indent Let $f$ be a linearly non-degenerate holomorphic map of $M$ into $\mathbb P^k(\mathbb C).$ 
Take a reduced representation $f = (f_0: \cdots : f_k)$. Then $F := (f_0, \cdots, f_k): M \rightarrow \mathbb C^{k+1} \setminus \{0\}$ is a holomorphic map with $\mathbb P(F) = f.$ 
Consider the holomorphic map
\begin{equation*}
F_p=(F_p)_{z}:=F^{(0)}\wedge F^{(1)}\wedge\cdots\wedge F^{(p)}: M \longrightarrow \wedge^{p+1}\mathbb{C}^{k+1}
\end{equation*}
for $0\le p \le k,$  where  $F^{(0)} := F= (f_0,\cdots,f_k)$ and $F^{(l)}=(F^{(l)})_{z}:=(f_0^{(l)},\cdots,f_k^{(l)})$ for each $l=0,1,\cdots,k$, and where the $l$-th derivatives 
$f_i^{(l)}=(f_i^{(l)})_{z}$, $i=0,...,k$, are taken with respect to $z$.
(Here and for the rest of this paper the index $|_{z}$ means that the corresponding term
is defined by using differentiation with respect to the variable $z$, and in order to keep notations simple, we usually drop this index if no confusion is possible).
The norm of $F_p$ is given by
\begin{equation*}
\left| F_p\right|:= \bigg(\sum_{0\leq i_0<\cdots<i_p\leq k}\left|W(f_{i_0},\cdots,f_{i_p})\right|^2\bigg)^\frac{1}{2},
\end{equation*}
where $W(f_{i_0},\cdots,f_{i_p}) = W_z(f_{i_0},\cdots,f_{i_p})$ denotes the Wronskian of $f_{i_0},\cdots,f_{i_p}$ with respect to $z$.
\begin{proposition}[{\cite[Proposition 2.1.6]{Fu93}}]\label{W}.\\
	For two holomorphic local coordinates $z$ and $\xi$ and a holomorphic function 
	$h : M \rightarrow \mathbb{C}$, the following holds :\\
	a) $  W_{\xi}(f_0,\cdots, f_p)= W_z(f_0,\cdots, f_p) \cdot (\frac{dz}{d\xi})^{p(p+1)/2}$.\\
	b) $W_{z}(hf_0,\cdots, hf_p)= W_z(f_0,\cdots, f_p) \cdot (h)^{p+1}. $ 
\end{proposition}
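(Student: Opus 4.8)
The plan is to deduce both identities from a single observation: the Wronskian is the determinant of the matrix of successive derivatives of $f_0,\dots,f_p$, and each of the two operations in question amounts to multiplying that matrix on the left by a lower triangular matrix whose determinant is immediate to read off.

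For part (b) I would expand $W_z(hf_0,\dots,hf_p)=\det\big((hf_j)^{(i)}\big)_{0\le i,j\le p}$ and apply the Leibniz rule $(hf_j)^{(i)}=\sum_{l=0}^{i}\binom{i}{l}h^{(i-l)}f_j^{(l)}$. This exhibits the matrix $\big((hf_j)^{(i)}\big)$ as the product $L\cdot\big(f_j^{(l)}\big)_{0\le l,j\le p}$, where $L$ is the lower triangular matrix with entries $\binom{i}{l}h^{(i-l)}$ for $l\le i$ and $0$ otherwise. Its diagonal entries are all equal to $h$, so $\det L=h^{p+1}$, and the multiplicativity of the determinant gives the formula.

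For part (a) the key input is the iterated chain rule: under a holomorphic change of coordinate $z=z(\xi)$, one has $\dfrac{d^i\varphi}{d\xi^i}=\sum_{l=1}^{i}c_{i,l}\,\dfrac{d^l\varphi}{dz^l}$ for $i\ge 1$, where the $c_{i,l}$ are universal polynomial expressions in $\tfrac{dz}{d\xi},\tfrac{d^2z}{d\xi^2},\dots$; I would record that $c_{i,l}=0$ for $l>i$ (each $\xi$-differentiation raises the $z$-order by at most one) and that $c_{i,i}=\big(\tfrac{dz}{d\xi}\big)^i$ (the top-order term picks up one factor $\tfrac{dz}{d\xi}$ at each step). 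Applying this to $\varphi=f_0,\dots,f_p$ writes the matrix of $\xi$-derivatives as $C\cdot\big((f_j)^{(l)}_z\big)$ with $C=(c_{i,l})_{0\le i,l\le p}$ lower triangular; hence $\det C=\prod_{i=0}^p\big(\tfrac{dz}{d\xi}\big)^i=\big(\tfrac{dz}{d\xi}\big)^{p(p+1)/2}$, and taking determinants yields (a).

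I do not anticipate a genuine obstacle here; the only place demanding a little care is verifying the triangular shape and the precise diagonal of the change-of-basis matrix in (a), after which everything reduces to $\det(XY)=\det X\det Y$. This lemma is purely bookkeeping, but it is exactly what is needed later to check that the various quantities built out of Wronskians (such as $|F_p|$) transform correctly under changes of the holomorphic local coordinate.
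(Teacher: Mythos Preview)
Your argument is correct: writing the Wronskian as a determinant and factoring out a lower-triangular change-of-basis matrix (via Leibniz for (b) and the iterated chain rule for (a)) is exactly the standard proof, and the diagonal entries you identify give the asserted factors $h^{p+1}$ and $\big(\tfrac{dz}{d\xi}\big)^{p(p+1)/2}$. Note that the paper itself does not supply a proof of this proposition; it is quoted as \cite[Proposition~2.1.6]{Fu93}, so there is no in-paper argument to compare against, but your approach matches the usual textbook treatment.
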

\begin{proposition}[{\cite[Proposition 2.1.7]{Fu93}}]\label{W1}.\\
	For holomorphic functions $f_0, \cdots , f_p : M \rightarrow \mathbb{C}$ the following conditions are equivalent:\\ 
	(i)\ $ f_0, \cdots, f_p$ are linearly dependent over $\mathbb C.$\\
	(ii)\ $W_z(f_0,\cdots,f_p) \equiv 0$ for some (or all) holomorphic local coordinate $z.$
\end{proposition}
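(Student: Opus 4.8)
The plan is to establish the two implications separately: (i) $\Rightarrow$ (ii) by differentiating a linear relation, and (ii) $\Rightarrow$ (i) by induction on $p$ combined with the elementary Cauchy existence theory for linear ordinary differential equations with holomorphic coefficients.

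First I would treat (i) $\Rightarrow$ (ii). Fix a holomorphic local coordinate $z$ and suppose $\sum_{i=0}^{p}c_i f_i\equiv 0$ with constants $c_0,\dots,c_p\in\mathbb C$ not all zero. Differentiating $l$ times gives $\sum_{i=0}^{p}c_i f_i^{(l)}\equiv 0$ for every $l=0,1,\dots,p$, so the columns of the $(p+1)\times(p+1)$ matrix $\big(f_i^{(l)}\big)_{0\le l,i\le p}$ satisfy a fixed nontrivial linear relation at each point of $M$; hence its determinant $W_z(f_0,\dots,f_p)$ vanishes identically. By part a) of Proposition \ref{W} this vanishing does not depend on the chosen coordinate, which accounts for the ``some (or all)'' clause.

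For (ii) $\Rightarrow$ (i) I would induct on $p$. When $p=0$ we have $W_z(f_0)=f_0$, so $W_z(f_0)\equiv 0$ forces $f_0\equiv 0$, and $f_0$ is then (trivially) linearly dependent. For the inductive step, assume the assertion for every family of $p$ holomorphic functions and suppose $W_z(f_0,\dots,f_p)\equiv 0$. If some $p$ of the $f_i$ already have identically vanishing Wronskian, the induction hypothesis shows those $p$ functions — hence all $p+1$ — are linearly dependent; so I may relabel and assume $W_z(f_0,\dots,f_{p-1})\not\equiv 0$. By the implication just proved, $f_0,\dots,f_{p-1}$ are then linearly independent over $\mathbb C$. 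Since $M$ is connected, the zero set $Z$ of the nonzero holomorphic function $W_z(f_0,\dots,f_{p-1})$ is discrete, so $U:=M\setminus Z$ is open, dense and connected. On $U$ set
$$L[y]:=\frac{W_z(f_0,\dots,f_{p-1},y)}{W_z(f_0,\dots,f_{p-1})};$$
expanding the numerator along its last column shows that $L$ is a linear differential operator of order exactly $p$ with holomorphic coefficients on $U$ and leading coefficient $1$, and that $f_0,\dots,f_{p-1}$ are $p$ linearly independent solutions of $L[y]=0$. Near any point of $U$ the space of local solutions is $p$-dimensional, so $f_0,\dots,f_{p-1}$ form a fundamental system there. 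Now $W_z(f_0,\dots,f_p)\equiv 0$ says precisely that $L[f_p]\equiv 0$ on $U$, so near each point of $U$ one has $f_p=\sum_{i=0}^{p-1}c_i f_i$ with constants $c_i$; by the identity theorem on the connected set $U$ these constants are the same everywhere, hence $f_p=\sum_{i=0}^{p-1}c_i f_i$ on $U$, and then on all of $M$. Thus $f_0,\dots,f_p$ are linearly dependent over $\mathbb C$.

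The step I expect to be the main obstacle is the passage, inside the second implication, from ``$f_p$ solves the order-$p$ equation $L[y]=0$'' to ``$f_p$ lies in $\mathrm{span}_{\mathbb C}(f_0,\dots,f_{p-1})$'': this is a global assertion, so one has to work on the connected dense open set $U$ on which the coefficients of $L$ are holomorphic, combine local uniqueness for linear ODEs with the connectedness of $U$ to patch the constants, and then return to $M$ by the identity theorem. The secondary point requiring attention is the bookkeeping that allows one to assume, after relabeling, that $W_z(f_0,\dots,f_{p-1})\not\equiv 0$.
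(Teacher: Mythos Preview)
The paper does not supply its own proof of this proposition: it is simply quoted from \cite[Proposition~2.1.7]{Fu93} and used as a black box, so there is no in-paper argument to compare against. Your proof is correct and is essentially the standard one (and, in outline, the one in Fujimoto's book): multilinearity of the determinant gives (i)$\Rightarrow$(ii), and for (ii)$\Rightarrow$(i) one reduces by induction to the case $W_z(f_0,\dots,f_{p-1})\not\equiv 0$, builds the monic order-$p$ operator $L[y]=W_z(f_0,\dots,f_{p-1},y)/W_z(f_0,\dots,f_{p-1})$ on the dense open set $U$, and uses that the local solution space has dimension $p$.

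One small wording point: the step ``by the identity theorem on the connected set $U$ these constants are the same everywhere'' is not literally an application of the identity theorem. What you are really using is that, because $W_z(f_0,\dots,f_{p-1})\neq 0$ at each point of $U$, the germs of $f_0,\dots,f_{p-1}$ are linearly independent there, so the coefficients $c_i$ in $f_p=\sum c_i f_i$ are \emph{uniquely determined} near each point; this makes each $c_i$ a well-defined locally constant function on $U$, hence constant since $U$ is connected (removing a discrete set from a connected Riemann surface keeps it connected). After that, the identity theorem is what extends the relation from $U$ to all of $M$. With that clarification the argument is complete.
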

We now take a hyperplane $H$ in $\mathbb P^k(\mathbb C)$ given by
\begin{equation*}
H:\overline{c}_0\omega_0+\cdots+\overline{c}_k\omega_k=0\,,
\end{equation*}
with $\sum_{i=0}^k|c_i|^2 = 1.$
We set 
\begin{equation*}
F_0(H) :=F(H):=\overline{c}_0f_0+\cdots+\overline{c}_kf_k
\end{equation*}
and 
\begin{equation*}
\left| F_p(H)\right|=\left| (F_p)_{z}(H)\right|:= \bigg(\sum_{0\leq i_1< \cdots<i_p\leq k}\left|\sum_{l\not= i_1,...,i_p}\overline{c}_lW(f_{l},f_{i_1}, \cdots,f_{i_p})\right|^2\bigg)^\frac{1}{2},
\end{equation*}
for $1\leq p \leq k.$ We note that by using Proposition \ref{W}, $\left| (F_p)_{z}(H)\right|$ is multiplied by a factor $|\frac{dz}{d\xi}|^{p(p+1)/2}$ if we choose another holomorphic local coordinate $\xi$, and it is multiplied by $|h|^{p+1}$ if we choose another reduced representation $f=(hf_0:\cdots:hf_k)$ with a nowhere zero holomorphic function $h.$
Finally, for $0 \leq p \leq k$, set the $p$-th contact function of $f$ for $H$ to be $\phi_p(H):=\dfrac{|F_p(H)|^2}{|F_p|^2}=\dfrac{|(F_p)_{z}(H)|^2}{|(F_p)_{z}|^2}.$\\

We next consider $q$ hyperplanes $H_1,\cdots,H_q$ in $\mathbb{P}^{k}(\mathbb{C})$ given by
$$H_j:\left\langle \omega ,A_j\right\rangle \equiv \overline{c}_{j0}\omega_0+\cdots+\overline{c}_{jk}\omega_k \quad(1\leq j\leq q)$$
where $A_j:=(c_{j0},\cdots,c_{jk})$ with $\sum_{i=0}^k|c_{ji}|^2 = 1.$\\
Assume now $N \geq k$ and $q \geq N+1$.
For $R\subseteq Q:=\left\{1,2,\cdots,q\right\},$ denote by $d(R)$ the dimension of the vector subspace of $\mathbb C^{k+1}$ generated by $\left\{A_j;j\in R \right\}$. The hyperplanes $H_1,\cdots,H_q$ are said to be in $N$-subgeneral position if $d(R)=k+1$ for all $R\subseteq Q$ with $\sharp (R)\geq N+1,$ where $\sharp (A)$ means the number of elements of a set $A.$ In the particular case $N=k$, these are said to be in general position.
\begin{theorem}[{\cite[Theorem 2.4.11]{Fu93}}] \label{N1}
	\emph {\it For given hyperplanes $H_1,\cdots,H_q $ $( q > 2N - k + 1)$ in $\mathbb{P}^k(\mathbb{C})$ located in $N$-subgeneral position, there are some rational numbers $\omega(1),\cdots,\omega(q)$ and $\theta$ satisfying the following conditions:\\
		\indent (i) $0<\omega(j)\leq \theta\leq 1  \quad(1\leq j\leq q),$\\
		\indent (ii) $ \sum^{q}_{j=1}\omega(j)=k+1+\theta(q-2N+k-1),$\\
		\indent (iii) $\frac{k+1}{2N-k+1}\leq \theta \leq \frac{k+1}{N+1},$\\
		\indent (iv) If $R\subset Q$ and $0< \sharp(R)\leq n+1,$ then $\sum_{j \in R} \omega(j)\leq d(R).$}
\end{theorem}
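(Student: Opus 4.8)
\textbf{Proof proposal for Theorem \ref{N1}.}
This is the existence statement for \emph{Nochka weights}, and the proof is purely combinatorial: it uses only the rank function $R\mapsto d(R)$ of the configuration $A_1,\dots,A_q\in\mathbf{C}^{k+1}$, and I would carry it out via the Nochka diagram construction (due to Nochka, reproduced in \cite{Fu93}). The one feature of $N$-subgeneral position that enters is a dimension bound: if $\emptyset\neq R\subseteq Q$ with $d(R)\le k$, then, since $q>2N-k+1\ge N+1$, one may enlarge $R$ to some $R'$ with $\sharp R'=N+1$; as each adjoined index raises the span-dimension by at most one, $\sharp R'-\sharp R\ge(k+1)-d(R)$, so
\[
\sharp R\le N-k+d(R)\qquad\text{whenever }d(R)\le k,
\]
while $d(R)=k+1$ as soon as $\sharp R\ge N+1$. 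I would also record the submodularity $d(R_1\cup R_2)+d(R_1\cap R_2)\le d(R_1)+d(R_2)$, which is the main structural tool behind (iv).

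I would then build the Nochka diagram: a strictly increasing chain $\emptyset=R_0\subsetneq R_1\subsetneq\cdots\subsetneq R_s=Q$ chosen greedily, so that at each stage $R_i$ is, among the supersets of $R_{i-1}$, one that packs in as many new indices as possible relative to the increase $d(R_i)-d(R_{i-1})$ of span-dimension (with cardinality taken maximal among the extremizers). This assigns to each index $j$ a layer (the $i$ with $j\in R_i\setminus R_{i-1}$) and a layerwise efficiency. Now one picks $\theta$: consistency of (ii) with $\omega(j)\le\theta$ forces $\theta\ge\frac{k+1}{2N-k+1}$, while $\theta\le\frac{k+1}{N+1}$ is what makes (iv) hold at the size-$(N+1)$ subsets (where $d(R)=k+1$), and one shows, using the displayed dimension bound, that $\theta$ can indeed be chosen in this interval --- this is (iii). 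The weights $\omega(j)$ are then fixed layer by layer, each of them $\le\theta$, whereupon (i) holds and (ii) reduces to a counting identity in the increments $\sharp R_i-\sharp R_{i-1}$ and $d(R_i)-d(R_{i-1})$ (which telescope to $q$ and $k+1$ respectively).

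The substantive step is (iv): for an arbitrary $R$ with $\sharp R\le N+1$ one must show $\sum_{j\in R}\omega(j)\le d(R)$. The plan is to intersect $R$ with the chain, bound $\sum_{j\in R\cap(R_i\setminus R_{i-1})}\omega(j)$ layer by layer using the efficiency of layer $i$ together with $d(R\cap R_i)-d(R\cap R_{i-1})\le d(R_i)-d(R_{i-1})$ (submodularity again), and then sum over $i$ so that the partial bounds collapse to $d(R)$; the maximality imposed on each $R_i$ is exactly what stops a stray subset of $R$ from overshooting its allotted weight. I expect this reassembly to be the main obstacle: one has to see that the layerwise estimates add up to $d(R)$ on the nose rather than $d(R)$ plus slack, which leans on the extremal choice of the $R_i$ and on the sharpness of the displayed dimension inequality, and one has to do so while keeping $\theta$ inside the two-sided range of (iii). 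The rest --- (i), (ii), and setting up the diagram --- is routine bookkeeping.
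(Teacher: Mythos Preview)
The paper does not prove Theorem \ref{N1}; it merely cites it from \cite[Theorem 2.4.11]{Fu93} and then uses the existence of Nochka weights as a black box in the proof of Lemma \ref{ML} and of Main Theorem 1. So there is no ``paper's own proof'' to compare your proposal against.

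That said, your sketch is essentially the standard Nochka construction as presented in \cite{Fu93}: the key dimension bound $\sharp R\le N-k+d(R)$ for $d(R)\le k$, the greedy construction of the Nochka filtration $\emptyset=R_0\subsetneq\cdots\subsetneq R_s$ by maximizing the slope $(\sharp R_i-\sharp R_{i-1})/(d(R_i)-d(R_{i-1}))$, the layerwise definition of the $\omega(j)$, and the verification of (iv) by intersecting an arbitrary $R$ with the chain and using submodularity. One point to be careful about: in the actual construction the filtration does not in general terminate at $R_s=Q$ but rather at the first stage where the slope drops to the value $\theta$ determined by the remaining indices, and the indices outside $R_s$ all receive the weight $\theta$ itself. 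Your description ``$R_s=Q$'' glosses over this, and the two-sided bound (iii) on $\theta$ comes out of precisely this stopping rule together with the dimension inequality, not from an a priori choice of $\theta$ as your sketch suggests. This is not a fatal gap---the argument you outline can be made to work---but as written the order of operations (choose $\theta$, then weights) is backwards relative to the standard proof, where $\theta$ emerges from the filtration.
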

\noindent  Constants $\omega(j)\ (1 \leq  j  \leq q)$ and $\theta$ with the properties of Theorem~\ref{N1} are called Nochka weights and a Nochka constant for $H_1, \cdots, H_q$ respectively.

We need the three following results of Fujimoto combining the previously introduced concept of contact functions with Nochka weights:
\begin{theorem}[{\cite[Theorem 2.5.3]{Fu93}}] \label{PL1}
	Let $H_1, \cdots, H_q$ be hyperplanes in $\mathbb P^k(\mathbb C)$ located in $N-$subgeneral position and let $\omega(j)$ $(1\leq j \leq q)$ and $\theta$ be Nochka weights and a Nochka constant for these hyperplanes. For every $\epsilon > 0$ there exist some positive numbers $\delta(>1)$ and $C,$ depending only on $\epsilon$ and $H_j\,$, $1\leq j \leq q,$ such that
	\begin{align*}\label{eq:T1}
	&dd^c\log \dfrac{\Pi_{p=0}^{k-1}|F_p|^{2\epsilon}}{\Pi_{1\leq j\leq q, 0\leq p\leq k-1}\log^{2\omega(j)}(\delta/\phi_p(H_j))}\nonumber \\
	&\geq C\bigg( \dfrac{|F_0|^{2\theta(q-2N+k-1)}|F_k|^2}{\Pi_{j=1}^q(|F(H_j)|^2\Pi_{p=0}^{k-1}\log^2(\delta/\phi_p(H_j)))^{\omega(j)}}\bigg)^{\frac{2}{k(k+1)}}dd^c|z|^2.
	\end{align*}
\end{theorem}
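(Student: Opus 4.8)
The plan is to convert the stated current inequality into a pointwise differential inequality and then to prove the latter from three ingredients: the infinitesimal Pl\"ucker formulas for the associated curves $F_0,\dots,F_k$, a sign analysis of the regularising $\log^2$-terms, and an algebraic estimate governed by the Nochka weights of Theorem~\ref{N1}. First I would expand the left-hand side, using that $\log$ turns the quotient into a difference:
$$dd^c\log\frac{\prod_{p=0}^{k-1}|F_p|^{2\epsilon}}{\prod_{1\le j\le q,\,0\le p\le k-1}\log^{2\omega(j)}(\delta/\phi_p(H_j))}=\epsilon\sum_{p=0}^{k-1}dd^c\log|F_p|^2-\sum_{j,p}\omega(j)\,dd^c\log\log^2\!\Big(\frac{\delta}{\phi_p(H_j)}\Big).$$
Since each contact function $\phi_p(H_j)=|F_p(H_j)|^2/|F_p|^2$ is bounded above on $M$ by a constant depending only on the $H_j$ (Cauchy--Schwarz in $\wedge^{p+1}\mathbb C^{k+1}$), I may fix $\delta>1$ so large that $\log(\delta/\phi_p(H_j))\ge 1$ everywhere; this is the only role of $\delta$, and it makes every denominator factor bounded below and the quotients manageable where $f$ meets the $H_j$.

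For the positive part I would invoke the infinitesimal Pl\"ucker formula $dd^c\log|F_p|^2=\dfrac{|F_{p-1}|^2|F_{p+1}|^2}{|F_p|^4}\,dd^c|z|^2$ (for $0\le p\le k-1$, with the convention $|F_{-1}|:=1$), so that $\epsilon\sum_p dd^c\log|F_p|^2$ is a nonnegative multiple of the Euclidean form $dd^c|z|^2$. A weighted arithmetic--geometric mean inequality, with weights arranged so that the intermediate associated curves telescope away, bounds this below by a positive power of a quotient of the extreme curves $|F_0|$ and $|F_k|$ times $dd^c|z|^2$; the total weight $\sum_{p=0}^{k-1}(k-p)=k(k+1)/2$ is exactly the origin of the exponent $2/(k(k+1))$ in the statement. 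The factor $\epsilon$ keeps a definite fraction of this positive term in reserve to dominate the negative contributions.

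The negative terms I would treat with the logarithmic identity $dd^c\log\psi^2=\frac{2}{\psi}\,dd^c\psi-\frac{2}{\psi^2}\,d\psi\wedge d^c\psi$ applied to $\psi:=\log(\delta/\phi_p(H_j))\ge 1$. As $d\psi\wedge d^c\psi\ge 0$, after multiplication by $-\omega(j)$ the last term enters the left-hand side with a favourable (nonnegative) sign; the genuinely dangerous piece is $\frac{2}{\psi}\,dd^c\log\phi_p(H_j)$, which I would control by a second Pl\"ucker-type identity for $dd^c\log\phi_p(H_j)$ together with the bound $1/\psi\le 1$, so that for $\delta$ large it is absorbed by the reserved $\epsilon$-fraction of the positive Pl\"ucker term.

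The decisive step, and the one I expect to be the main obstacle, is the algebraic reorganisation of the denominator by means of the Nochka weights. Fixing a point of $M$ and ordering the indices so that $|F(H_1)|\le\cdots\le|F(H_q)|$, I would apply the defining properties of Theorem~\ref{N1}: condition (iv) controls $\prod_j|F(H_j)|^{2\omega(j)}$ on the small subsets $R$ with $\sharp(R)\le k+1$ over which $f$ is nearly degenerate, condition (i) bounds each weight by $\theta$, and condition (ii), $\sum_j\omega(j)=k+1+\theta(q-2N+k-1)$, accounts for the total exponent and manufactures precisely the factor $|F_0|^{2\theta(q-2N+k-1)}$ against $|F_k|^2$. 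Assembling this pointwise algebraic inequality with the Pl\"ucker lower bound and the log-log control, through one final weighted arithmetic--geometric mean, yields $dd^c(\cdots)\ge C\,\Lambda^{2/(k(k+1))}dd^c|z|^2$, where $\Lambda$ denotes the quotient displayed on the right-hand side of the statement and $C>0$ depends only on $\epsilon$ and the $H_j$. The delicate part throughout is the simultaneous bookkeeping: arranging the weights so that every intermediate $|F_p|$ cancels, the surviving powers of $|F_0|$ and $|F_k|$ match the claimed exponents, the constant stays uniform over $M$, and the $\log^2$-regularisation keeps every quotient integrable where $f$ meets the hyperplanes $H_j$.
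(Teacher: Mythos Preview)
The paper does not supply its own proof of this theorem; it is quoted verbatim from Fujimoto's monograph \cite[Theorem~2.5.3]{Fu93} and used as a black box in the proof of Lemma~\ref{ML}. There is therefore no in-paper argument to compare against; the comments below measure your proposal against Fujimoto's original proof.

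Your list of ingredients is right, but the assembly has a genuine gap. You treat the term $\dfrac{2\omega(j)}{\psi^{2}}\,d\psi\wedge d^c\psi$ as merely ``favourable sign'' and then expect the reserved $\epsilon$-Pl\"ucker contribution
\[
\epsilon\sum_{p=0}^{k-1}\frac{|F_{p-1}|^{2}|F_{p+1}|^{2}}{|F_p|^{4}}\,dd^c|z|^2
\]
to do the work. But this Pl\"ucker term depends only on the associated curves $|F_p|$ and carries no information about the individual hyperplanes; it cannot by itself produce the factors $|F(H_j)|^{2\omega(j)}$ and $\log^{2}(\delta/\phi_p(H_j))$ that appear in the denominator of the right-hand side. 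In Fujimoto's argument the $d\psi\wedge d^c\psi/\psi^{2}$ term is the main engine, not a throwaway: one uses a quantitative Pl\"ucker-type identity for the contact functions (cf.\ \cite[Propositions~2.5.1--2.5.2]{Fu93}) to bound $d\log\phi_p(H_j)\wedge d^c\log\phi_p(H_j)$ from below by an expression in $\phi_{p\pm1}(H_j)$ and the curvatures $|F_{p-1}|^{2}|F_{p+1}|^{2}/|F_p|^{4}$. Only after dividing by $\log^{2}(\delta/\phi_p(H_j))$, summing over $j$ with the Nochka weights, and invoking the Nochka \emph{product} inequality \cite[Theorem~2.4.14]{Fu93} does one obtain the hyperplane-dependent quotient with the correct exponent $|F_0|^{2\theta(q-2N+k-1)}$. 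Your ``algebraic reorganisation'' paragraph has nothing on the positive side to which this reorganisation could be applied.

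A secondary issue: $dd^c\log\phi_p(H_j)=dd^c\log|F_p(H_j)|^{2}-dd^c\log|F_p|^{2}$ is a difference of two nonnegative forms and has no fixed sign, so ``absorbed by the reserved $\epsilon$-fraction'' is not automatic; in the actual proof this piece is handled jointly with $\epsilon\,dd^c\log|F_p|^{2}$ through the same contact-function identities, not by a separate absorption step.
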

\begin{proposition}[{\cite[Proposition 2.5.7]{Fu93}}]\label{P} \
	Set $\sigma_p=p(p+1)/2$ for $0 \leq p \leq k$ and $\tau_k = \sum_{p=0}^k\sigma_p.$ Then,
	\begin{align*}
	dd^c\log(|F_0|^2|F_1|^2\cdots|F_{k-1}|^2)\geq \dfrac{\tau_k}{\sigma_k}\bigg(\dfrac{|F_0|^2|F_1|^2\cdots|F_{k}|^2}{|F_0|^{2\sigma_{k+1}}}\bigg)^{1/\tau_k}dd^c|z|^2.
	\end{align*}
\end{proposition}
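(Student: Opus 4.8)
The plan is to reduce the statement to the infinitesimal Pl\"ucker formula for the associated curves $(F_p)_z$ of $f$, and then to run one weighted arithmetic--geometric mean inequality whose weights are read off from the numbers $\sigma_p$. Recall that for each $0\le p\le k-1$ (with the convention $|F_{-1}|\equiv 1$) one has the algebraic identity $|F_p|^2|F_p'|^2-|\langle F_p',\overline{F_p}\rangle|^2=|F_{p-1}|^2|F_{p+1}|^2$, where $F_p'$ is the $z$-derivative of $(F_p)_z$; since $\partial\bar\partial\log|F_p|^2$ equals $\bigl(|F_p|^2|F_p'|^2-|\langle F_p',\overline{F_p}\rangle|^2\bigr)/|F_p|^4$ times $dz\wedge d\bar z$ away from the zeros of $F_p$, and these zeros add only a nonnegative divisor when one passes to currents, one obtains on all of $M$
\begin{equation*}
dd^c\log|F_p|^2\;\ge\;\frac{|F_{p-1}|^2\,|F_{p+1}|^2}{|F_p|^4}\,dd^c|z|^2,\qquad 0\le p\le k-1.
\end{equation*}
(For $p=0$ this is just the pull-back by $f$ of the Fubini--Study form; the general case is the associated-curve computation of \cite{Fu93}.) Summing over $p=0,\dots,k-1$ gives
\begin{equation*}
dd^c\log\bigl(|F_0|^2|F_1|^2\cdots|F_{k-1}|^2\bigr)\;\ge\;\Bigl(\sum_{p=0}^{k-1}\frac{|F_{p-1}|^2|F_{p+1}|^2}{|F_p|^4}\Bigr)dd^c|z|^2.
\end{equation*}

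The next step is to bound $\sum_{p=0}^{k-1}t_p$ from below, where $t_p:=|F_{p-1}|^2|F_{p+1}|^2/|F_p|^4$, by the weighted AM--GM inequality $\sum_p t_p\ge\prod_p(t_p/\mu_p)^{\mu_p}$ applied with the weights $\mu_p:=\sigma_{k-p}/\tau_k$ for $0\le p\le k-1$. These are admissible weights: each $\mu_p>0$, and since $\sigma_0=0$ and $\tau_k=\sigma_0+\cdots+\sigma_k$ one has $\sum_{p=0}^{k-1}\mu_p=(\sigma_1+\cdots+\sigma_k)/\tau_k=1$. Since $\log t_p=\log|F_{p-1}|^2-2\log|F_p|^2+\log|F_{p+1}|^2$, the coefficient of $\log|F_j|^2$ in $\sum_p\mu_p\log t_p$ is a discrete second difference of the $\mu_p$; using $\sigma_1=1$ and the elementary identity $\sigma_{m+1}-2\sigma_m+\sigma_{m-1}=1$ ($m\ge1$) one checks that this coefficient equals $1/\tau_k$ for $1\le j\le k$ and equals $(1-\sigma_{k+1})/\tau_k$ for $j=0$. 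Hence $\sum_p\mu_p\log t_p=\tfrac1{\tau_k}\log\bigl(|F_0|^2\cdots|F_k|^2/|F_0|^{2\sigma_{k+1}}\bigr)$, and therefore
\begin{equation*}
\sum_{p=0}^{k-1}\frac{|F_{p-1}|^2|F_{p+1}|^2}{|F_p|^4}\;\ge\;\Bigl(\prod_{p=0}^{k-1}\mu_p^{-\mu_p}\Bigr)\Bigl(\frac{|F_0|^2|F_1|^2\cdots|F_k|^2}{|F_0|^{2\sigma_{k+1}}}\Bigr)^{1/\tau_k}.
\end{equation*}

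Finally one estimates the constant: with $\mu_p=\sigma_{k-p}/\tau_k$ and $\sum_{j=1}^k\sigma_j=\tau_k$ one gets $\prod_{p=0}^{k-1}\mu_p^{-\mu_p}=\tau_k\prod_{j=1}^k\sigma_j^{-\sigma_j/\tau_k}$; the numbers $\sigma_j/\tau_k$ $(1\le j\le k)$ are nonnegative with sum $1$, so the weighted geometric mean satisfies $\prod_{j=1}^k\sigma_j^{\sigma_j/\tau_k}\le\max_{1\le j\le k}\sigma_j=\sigma_k$, whence $\prod_{p=0}^{k-1}\mu_p^{-\mu_p}\ge\tau_k/\sigma_k$. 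Combining the three displays gives exactly the inequality asserted. The only part with genuine mathematical content is the Pl\"ucker identity behind the first display --- in particular the bookkeeping that, across the common zeros of the Wronskians making up $|F_p|$, passing from smooth forms to currents can only strengthen the inequality --- while everything afterwards is the combinatorics of the weights $\sigma_{k-p}/\tau_k$ and two applications of AM--GM, the second of which (``weighted geometric mean $\le$ maximum'') is precisely what produces the clean constant $\tau_k/\sigma_k$. I would therefore expect the main care to be spent on that first step and on verifying the identity $\sigma_{m+1}-2\sigma_m+\sigma_{m-1}=1$ that makes the weights line up.
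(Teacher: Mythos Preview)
Your argument is correct. The paper does not actually prove this proposition---it is quoted verbatim from \cite[Proposition~2.5.7]{Fu93} and used as a black box---so there is no ``paper's own proof'' to compare against here. That said, what you have written is precisely Fujimoto's original argument: the infinitesimal Pl\"ucker inequality $dd^c\log|F_p|^2\ge\dfrac{|F_{p-1}|^2|F_{p+1}|^2}{|F_p|^4}\,dd^c|z|^2$ summed over $p$, followed by the weighted AM--GM with weights $\sigma_{k-p}/\tau_k$, and the final bound on the constant via ``weighted geometric mean $\le$ max''. Your bookkeeping on the second-difference identity $\sigma_{m+1}-2\sigma_m+\sigma_{m-1}=1$ and on the boundary coefficients ($j=0$, $j=k-1$, $j=k$) is accurate, and the two AM--GM steps are applied correctly.
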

\begin{proposition} [{\cite[Lemma 3.2.13]{Fu93}}]
	\label{P7}
	Let $f$ be a non-degenerate holomorphic map of a domain in  $\mathbb{C}$ into $\mathbb{P}^k(\mathbb{C})$ with reduced representation $f=(f_0:\cdots :f_k)$ and let $H_1,\cdots,H_q$ be hyperplanes located in $N$-subgeneral position $( q > 2N - k + 1)$ with Nochka weights $\omega(1),\cdots,\omega(q)$ respectively. Then,
	$$\nu_\phi + \sum^{q}_{j=1}\omega(j)\cdot\min (\nu_{(f, H_j)}, k ) \geq 0,$$
	where $\phi = \dfrac{|F_k|}{\Pi_{j=1}^q \mid F(H_j)\mid^{\omega (j)}}$ and $\nu_{\phi}$ is the divisor of $\phi.$
\end{proposition}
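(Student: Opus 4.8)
The plan is to reduce the assertion to a pointwise inequality of divisors and then to establish it at each point by coupling the structure of the osculating flag of $f$ with property (iv) of the Nochka weights. First, since $|F_k|$ and each $|F(H_j)|$ are — up to nowhere vanishing holomorphic factors arising from changes of holomorphic local coordinate (Proposition \ref{W}) and of reduced representation — the modulus of a holomorphic function, the divisor $\nu_\phi$ is globally well defined and satisfies $\nu_\phi=\nu_{F_k}-\sum_{j=1}^q\omega(j)\,\nu_{(f,H_j)}$. Consequently the asserted inequality is equivalent to
$$\nu_{F_k}(z_0)\ \geq\ \sum_{j=1}^q\omega(j)\bigl(\nu_{(f,H_j)}(z_0)-\min(\nu_{(f,H_j)}(z_0),k)\bigr)=\sum_{j=1}^q\omega(j)\max\bigl(\nu_{(f,H_j)}(z_0)-k,\,0\bigr)$$
for each $z_0$ in the domain. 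I would fix $z_0$, abbreviate $\nu_j:=\nu_{(f,H_j)}(z_0)$, and write $F(H_j)=\langle F,A_j\rangle$ with $A_j=(c_{j0},\dots,c_{jk})$.

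Next I would split the right-hand side into levels: from $\max(\nu_j-k,0)=\sum_{l\geq k}\mathbf{1}\{\nu_j\geq l+1\}$ one obtains
$$\sum_{j=1}^q\omega(j)\max(\nu_j-k,0)=\sum_{l\geq k}\ \sum_{j\in R_l}\omega(j),\qquad R_l:=\{\,j:\nu_j\geq l+1\,\}.$$
For $j\in R_l$ the holomorphic function $F(H_j)$ vanishes at $z_0$ to order $\geq l+1$, i.e.\ $\langle F^{(i)}(z_0),A_j\rangle=0$ for $0\leq i\leq l$; hence all vectors $A_j$ with $j\in R_l$ annihilate the osculating space $V_{l+1}:=\langle F(z_0),F'(z_0),\dots,F^{(l)}(z_0)\rangle$, so that $d(R_l)\leq(k+1)-\dim V_{l+1}$. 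In particular, since $\dim V_1=1$ (as $F(z_0)\neq0$ for a reduced representation), we get $d(R_l)\leq d(R_0)\leq k<k+1$ for every $l\geq0$, and the definition of $N$-subgeneral position then forces $\sharp R_l\leq N$. Therefore Theorem \ref{N1}(iv) applies and yields $\sum_{j\in R_l}\omega(j)\leq d(R_l)\leq(k+1)-\dim V_{l+1}$ for every $l\geq k$ (the case $R_l=\emptyset$ being trivial).

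Finally I would invoke the local Plücker formula for the Wronskian. Letting $0\leq n_0<n_1<\cdots<n_k$ be the sequence of contact orders of the non-degenerate curve $f$ at $z_0$ (the exponents occurring in a Hermite normal form of $F$ at $z_0$), one has $\dim V_{l+1}=\sharp\{i:n_i\leq l\}$ and $\nu_{F_k}(z_0)=\sum_{i=0}^k(n_i-i)$, and a direct rearrangement gives
$$\nu_{F_k}(z_0)=\sum_{l=0}^{k-1}\bigl((l+1)-\dim V_{l+1}\bigr)+\sum_{l\geq k}\bigl((k+1)-\dim V_{l+1}\bigr)\ \geq\ \sum_{l\geq k}\bigl((k+1)-\dim V_{l+1}\bigr),$$
since $\dim V_{l+1}\leq l+1$ makes each term of the first sum nonnegative. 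Chaining this with the two displays above yields the desired pointwise inequality, and letting $z_0$ vary gives the statement. The step I expect to demand the most care is the last one: the identities $\nu_{F_k}(z_0)=\sum_i(n_i-i)$ and $\dim V_{l+1}=\sharp\{i:n_i\leq l\}$ should be verified by bringing $F$ into Hermite normal form at $z_0$ — a $\mathrm{GL}_{k+1}(\mathbf{C})$ change of coordinates, which multiplies $F_k$ only by a nonzero constant and hence leaves $\nu_{F_k}$ unchanged — and then checking, via the Vandermonde-type leading behaviour of $W(z^{n_0},\dots,z^{n_k})$, that the naive count of the order of vanishing is actually attained; alternatively one simply cites this classical local formula from \cite{Fu93}. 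Everything else is routine bookkeeping with the Nochka weights.
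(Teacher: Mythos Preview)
The paper does not supply its own proof of this proposition --- it is quoted from \cite[Lemma 3.2.13]{Fu93}. Your argument is correct and is essentially the standard proof found there: the reduction to the pointwise estimate $\nu_{F_k}(z_0)\geq\sum_j\omega(j)\max(\nu_j-k,0)$, the level-set decomposition via $R_l=\{j:\nu_j\geq l+1\}$, the bound $\sum_{j\in R_l}\omega(j)\leq d(R_l)\leq(k+1)-\dim V_{l+1}$ obtained from the annihilator argument together with Nochka's property (iv) (applicable because $d(R_l)\leq k$ forces $\sharp R_l\leq N$), and the final appeal to the local Pl\"ucker formula $\nu_{F_k}(z_0)=\sum_i(n_i-i)$ are exactly how Fujimoto proceeds.
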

\begin{lemma}[{Generalized Schwarz's Lemma \cite{Ah}}] \label{L3}
	Let $v$ be a non-negative real-valued continuous subharmonic function on $\Delta_R(=\{ z \in \mathbb C \mid |z| <R\}).$ If $v$ satisfies the inequality  $\Delta\log v \geq v^2$ in the sense of distribution, then
	$$v(z) \leq \dfrac{2R}{R^2 - |z|^2}.$$
\end{lemma}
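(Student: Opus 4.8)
The plan is to compare $v$ with the Poincar\'e-type conformal factor of the disk. For $0<r\le R$ set $\rho_r(z):=\dfrac{2r}{r^2-|z|^2}$ on $\Delta_r$; a direct computation gives $\Delta\log\rho_r=\rho_r^2$, so $\rho_r$ solves exactly the equation $\Delta\log u=u^2$ underlying the hypothesis (up to scaling it is the hyperbolic metric of curvature $-1$ on $\Delta_r$). Since $v$ is only assumed continuous, with no control as $|z|\to R$, I would first establish the estimate $v\le\rho_r$ on each smaller disk $\Delta_r$, $r<R$, and then let $r\nearrow R$: for fixed $z$ with $|z|<R$ one has $\rho_r(z)\to\dfrac{2R}{R^2-|z|^2}$, which yields the stated bound.

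The core is the estimate on $\Delta_r$. Put $w:=v/\rho_r$ on $\Delta_r$. Because $v$ is bounded on the compact set $\overline{\Delta_r}\subset\Delta_R$ while $\rho_r(z)\to+\infty$ as $|z|\to r^-$, we have $w(z)\to0$ at the boundary circle, so $w$ attains its maximum $M:=\max_{\Delta_r}w$ at an interior point $z_0$. Suppose $M>1$. Then $v(z_0)>0$, so $v>0$ near $z_0$, and there $\log w=\log v-\log\rho_r$ is a genuine function satisfying, in the distributional sense, $\Delta\log w\ge v^2-\rho_r^2=\rho_r^2(w^2-1)$; on the neighbourhood of $z_0$ where $w>1$ this makes $\log w$ subharmonic, and since it attains there the interior maximum $\log M$, the strong maximum principle forces $w\equiv M$ near $z_0$. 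A connectedness argument then spreads $w\equiv M$ over all of $\Delta_r$: the set $\{w=M\}$ is nonempty, closed by continuity, and open because at each of its points $w>1$, so the same local reasoning applies; as $\Delta_r$ is connected, $\{w=M\}=\Delta_r$. Hence $v=M\rho_r$ on $\Delta_r$, and substituting into the hypothesis gives $\rho_r^2=\Delta\log\rho_r=\Delta\log v\ge v^2=M^2\rho_r^2$, i.e.\ $M\le1$, a contradiction. Therefore $M\le1$, that is $v\le\rho_r$ on $\Delta_r$, and letting $r\nearrow R$ finishes the proof.

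The step I expect to need the most care is turning the distributional hypothesis into a pointwise maximum-principle statement. One must check that in a neighbourhood of the extremal point $z_0$ --- where $v(z_0)>0$, hence $v>0$ --- the inequality $\Delta\log v\ge v^2$ is an inequality of honest distributions on a domain on which $\log v$ is continuous, so that the strong maximum principle for subharmonic distributions applies; the possible zero set of $v$, where $\log v=-\infty$, is irrelevant since it does not meet a neighbourhood of $z_0$. A technically smoother variant would be to mollify $v$ and reduce to the classical case of a $C^2$ function, at the cost of an approximation step; the direct route above seems cleaner.
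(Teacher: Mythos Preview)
The paper does not supply its own proof of this lemma; it is stated as a quotation from Ahlfors \cite{Ah} and used as a black box in the proof of Lemma~\ref{ML}. Your argument is the classical Ahlfors comparison proof and is correct: you compare $v$ with the exact solution $\rho_r(z)=2r/(r^2-|z|^2)$ on a slightly smaller disk $\Delta_r$ (so that $v$ is bounded and $v/\rho_r\to 0$ at the boundary), apply the strong maximum principle to the subharmonic function $\log(v/\rho_r)$ on the region where the ratio exceeds $1$, and then let $r\nearrow R$. One small remark: once the connectedness argument gives $w\equiv M$ on all of $\Delta_r$, you already have a contradiction with the boundary behaviour $w\to 0$, so the final substitution step is not strictly needed, though it is also valid. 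Your caution about the distributional hypothesis is appropriate; since at the putative interior maximum $v(z_0)>0$, continuity gives $v>0$ nearby, $\log v$ is continuous there, and the distributional inequality $\Delta\log w\ge \rho_r^2(w^2-1)\ge 0$ indeed yields subharmonicity in the usual sense, so the strong maximum principle applies.
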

\begin{lemma}\label{ML}
	\ Let $f= ( f_0: \cdots : f_k ) : \Delta_R  \rightarrow \mathbb P^k (\mathbb{C})$ be a non-degenerate holomorphic map, $ H_1, ... , H_q$ be hyperplanes in $ \mathbb P^k (\mathbb{C}) $ in $N-$subgeneral position ($N \geq k$ and $q > 2N-k+1$), and $\omega(j)(1 \leq j \leq q)$ be their Nochka weights. Assume that there are positive real numbers $\eta_j (1 \leq j \leq q)$ and $[-\infty, \infty)-$valued continuous subharmonic functions $u_j$ sastifying conditions (C1), (C2). Then for an arbitrarily given $\epsilon$ satisfying $\gamma=\sum_{j=1}^q\omega(j)(1-\eta_j)-(k+1) >\epsilon\sigma_{k+1} >0$, there exists a positive constant  $C,$ depending only on $\epsilon, H_j,\eta_j,\omega(j) (1\leq j \leq q),$ such that
	\begin{equation*}
	\dfrac{|F|^{\gamma -\epsilon\sigma_{k+1}}e^{\sum_{j=1}^q\omega(j)u_j}.|F_k|^{1+\epsilon}.\Pi_{j=1}^q\Pi_{p=0}^{k-1}|F_p(H_j)|^{\epsilon/q}}
	{\Pi_{j=1}^q|F(H_j)|^{\omega(j)}} \leqslant C\bigg(\dfrac{2R}{R^2 -|z|^2}\bigg)^{\sigma_k +\epsilon\tau_k},
	\end{equation*}
	where $\sigma_p=p(p+1)/2$ for $0 \leq p \leq k$ and $\tau_k = \sum_{p=0}^k\sigma_p.$
\end{lemma}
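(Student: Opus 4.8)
The plan is to follow Fujimoto's strategy from \cite{Fu90}, but with the general position hypothesis replaced by the $N$-subgeneral position condition, so that Nochka weights enter at every step. First I would combine the three ingredients from Section \ref{s3}: Theorem \ref{PL1} (the lower bound for $dd^c\log$ of the ratio of products of $|F_p|^{2\epsilon}$ by products of $\log^{2\omega(j)}(\delta/\phi_p(H_j))$), Proposition \ref{P} (the lower bound for $dd^c\log(|F_0|^2\cdots|F_{k-1}|^2)$), and the hypotheses (C1), (C2) on the subharmonic functions $u_j$. The key auxiliary function will be
\begin{equation*}
v := \frac{|F|^{a}\,e^{\sum_{j=1}^q\omega(j)u_j}\,|F_k|^{1+\epsilon}\,\Pi_{j,p}|F_p(H_j)|^{\epsilon/q}}{\Pi_{j=1}^q|F(H_j)|^{\omega(j)}\,\Pi_{j,p}\log^{\omega(j)}(\delta/\phi_p(H_j))}\cdot(\text{suitable power})
\end{equation*}
for an appropriate exponent $a$ tied to $\gamma-\epsilon\sigma_{k+1}$; more precisely I would take $v$ to be the $\bigl(\tfrac{2}{k(k+1)}\text{-type}\bigr)$ power of this expression so that, after applying $dd^c\log$, the curvature inequalities above yield $\Delta\log v\geq v^2$ in the sense of distributions. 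Here conditions (C1) and (C2) are exactly what guarantees that $e^{\sum\omega(j)u_j}$ contributes the right sign: (C1) bounds it by a power of $\|f\|=|F|/(\text{unit})$, absorbing part of the $|F|$ exponent, and (C2) ensures $\log v$ is subharmonic across $f^{-1}(H_j)$ despite the poles of $1/|F(H_j)|^{\omega(j)}$, by comparison with $\min(\nu_{(f,H_j)},k)$ and Proposition \ref{P7}.

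The second step is to verify that $v$ extends to a non-negative continuous subharmonic function on all of $\Delta_R$ satisfying $\Delta\log v\ge v^2$. Away from $\bigcup_j f^{-1}(H_j)$ and the zeros of the Wronskians this is the differential-geometric computation combining Theorems \ref{PL1}, Proposition \ref{P} and the subharmonicity of $\sum\omega(j)u_j$; the constant $C$ arising from Theorem \ref{PL1} is what forces $C$ to depend only on $\epsilon$, the $H_j$, the $\eta_j$ and $\omega(j)$. At the possibly-singular points one uses the local limits in (C2) together with Proposition \ref{P7} — which says $\nu_{|F_k|}+\sum_j\omega(j)\min(\nu_{(f,H_j)},k)\ge 0$ — to see that the divisor of $v$ is $\ge 0$, so $v$ is bounded near those points and the subharmonicity inequality persists across them by a removable-singularity argument. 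Once $v$ is in place, the third and final step is immediate: apply the Generalized Schwarz Lemma (Lemma \ref{L3}) to conclude $v(z)\le \dfrac{2R}{R^2-|z|^2}$, and then raise to the reciprocal power and rearrange, using the elementary bound $\log^{\omega(j)}(\delta/\phi_p(H_j))\le C'$ (valid since $\phi_p(H_j)\le$ const and $\delta>1$, absorbing these logarithmic terms into $C$) to clear the $\log$ factors from the denominator. This produces exactly the displayed inequality with exponent $\sigma_k+\epsilon\tau_k$ on the right.

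The main obstacle is the bookkeeping in the first step: one must choose the exponents on $|F|$, on each $|F_p(H_j)|^{\epsilon/q}$, and on $|F_k|$ so that the three curvature lower bounds (from Theorem \ref{PL1}, Proposition \ref{P}, and the trivial $dd^c(\sum\omega(j)u_j)\ge 0$) assemble into precisely $v^2\,dd^c|z|^2$, while simultaneously keeping the total $|F|$-exponent nonnegative — this is where the hypothesis $\gamma=\sum_j\omega(j)(1-\eta_j)-(k+1)>\epsilon\sigma_{k+1}>0$ is used, the surplus $\gamma-\epsilon\sigma_{k+1}$ being exactly the leftover power of $|F|$ in the numerator after (C1) has been spent. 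Matching the homogeneity under change of local coordinate (each $|F_p|$ and $|F_p(H_j)|$ scaling by $|dz/d\xi|^{\sigma_p}$ by Proposition \ref{W}) fixes the exponent $\sigma_k+\epsilon\tau_k$ and confirms the inequality is coordinate-independent. The subgeneral-position generalization itself costs little beyond carrying the weights $\omega(j)$ through, since Theorems \ref{N1}, \ref{PL1} and Propositions \ref{P}, \ref{P7} already incorporate them; the real work is the exponent algebra and the removable-singularity verification at $f^{-1}(H_j)$.
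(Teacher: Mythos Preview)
Your strategy is the same as the paper's: build a conformal factor $\lambda$ out of $|F|$, $|F_k|$, $\prod|F_p|^\epsilon$, $e^{\sum\omega(j)u_j}$, and $\prod(|F(H_j)|\prod_p\log(\delta/\phi_p(H_j)))^{\omega(j)}$; use (C2) with Proposition~\ref{P7} for continuity across $f^{-1}(H_j)$; combine Theorem~\ref{PL1} and Proposition~\ref{P} for $dd^c\log\lambda\ge C_1\lambda^2\,dd^c|z|^2$; then invoke Lemma~\ref{L3}. Two points need fixing. A minor one: to merge the two curvature lower bounds into a single $\lambda^2$ you need the weighted AM--GM inequality $\alpha A+\beta B\ge(\alpha+\beta)A^{\alpha/(\alpha+\beta)}B^{\beta/(\alpha+\beta)}$ with $\alpha=\sigma_k$, $\beta=\epsilon\tau_k$, and then the Nochka identity $\theta(q-2N+k-1)=\sum_j\omega(j)-(k+1)$ from Theorem~\ref{N1}(ii) followed by (C1) in the multiplicative form $(|F|^{\eta_j}/e^{u_j})^{\omega(j)}\ge 1$ to reach the exponent $\gamma-\epsilon\sigma_{k+1}$ on $|F|$. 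You gesture at this but do not say how the two different homogeneities combine.

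The more serious error is in your last step. The bound $\log^{\omega(j)}(\delta/\phi_p(H_j))\le C'$ is false: $\phi_p(H_j)$ may approach $0$, so the logarithm is unbounded above. What actually clears the log factors is that the \emph{product} $\phi_p(H_j)^{\epsilon/(2q)}\log^{\omega(j)}(\delta/\phi_p(H_j))$ is bounded on $(0,1]$ (the function $x\mapsto x^{\epsilon/q}\log^{\omega}(\delta/x^2)$ is bounded there). One rewrites $\prod_{p=0}^{k-1}|F_p|^{\epsilon}=\prod_{j=1}^{q}\prod_{p=0}^{k-1}|F_p(H_j)|^{\epsilon/q}\big/\phi_p(H_j)^{\epsilon/(2q)}$, and the $\phi_p(H_j)^{\epsilon/(2q)}$ then pair with the logs. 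In other words, the factors $|F_p(H_j)|^{\epsilon/q}$ in the statement of the lemma are not decorative---they are exactly what is spent to kill the logarithms; without them the passage from the Schwarz-lemma estimate to the stated inequality does not go through.
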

\begin{proof}
We set
\begin{equation*}
\lambda_z := \bigg( \dfrac{|F_z|^{\gamma -\epsilon\sigma_{k+1}}e^{\sum_{j=1}^q\omega(j)u_j}.|(F_k)_z|.\prod_{p=0}^{k}|(F_p)_z|^{\epsilon}}{\prod_{j=1}^{q}(|F(H_j)|\Pi_{p=0}^{k-1}\log(\delta/\phi_p(H_j)))^{\omega (j)}}\bigg)^{\frac{1}{\sigma_k +\epsilon\tau_k}},
\end{equation*}
and define the pseudo-metric $d\tau_z^2 := \lambda_z^2|dz|^2.$ Using Proposition \ref{W}, we can see that 
\begin{align*}
d\tau_{\xi} &:= \bigg( \dfrac{|F_{\xi}|^{\gamma -\epsilon\sigma_{k+1}}e^{\sum_{j=1}^q\omega(j)u_j}.|(F_k)_{\xi}|.\prod_{p=0}^{k}|(F_p)_{\xi}|^{\epsilon}}{\prod_{j=1}^{q}(|F(H_j)|\Pi_{p=0}^{k-1}\log(\delta/\phi_p(H_j)))^{\omega (j)}}\bigg)^{\frac{1}{\sigma_k +\epsilon\tau_k}}|d\xi|\\
&= \bigg( \dfrac{|F_z|^{\gamma -\epsilon\sigma_{k+1}}e^{\sum_{j=1}^q\omega(j)u_j}.|(F_k)_{z}||\frac{dz}{d\xi}|^{\sigma_k}.\prod_{p=0}^{k}|(F_p)_{z}|^{\epsilon}.|\frac{dz}{d\xi}|^{\sum_{p=0}^{k}\epsilon\frac{p(p+1)}{2}}}{\prod_{j=1}^{q}(|F(H_j)|\Pi_{p=0}^{k-1}\log(\delta/\phi_p(H_j)))^{\omega (j)}}\bigg)^{\frac{1}{\sigma_k +\epsilon\tau_k}}|\dfrac{d\xi}{dz}|.|dz|\\
&= \bigg( \dfrac{|F_z|^{\gamma -\epsilon\sigma_{k+1}}e^{\sum_{j=1}^q\omega(j)u_j}.|(F_k)_{z}|.\prod_{p=0}^{k}|(F_p)_{z}|^{\epsilon}.|\frac{dz}{d\xi}|^{\sigma_k +\epsilon\tau_k}}{\prod_{j=1}^{q}(|F(H_j)|\Pi_{p=0}^{k-1}\log(\delta/\phi_p(H_j)))^{\omega (j)}}\bigg)^{\frac{1}{\sigma_k +\epsilon\tau_k}}|\dfrac{d\xi}{dz}|.|dz|\\
&=d\tau_{z}.
\end{align*}
Thus $d\tau_z^2$ is independent of the choice of the local coordinate $z.$ We will denote $d\tau_z^2$ by $d\tau^2$ for convenience. So $d\tau^2$ is well-defined on $M$ and 
\begin{equation*}
d\tau^2 =\bigg( \dfrac{|F|^{\gamma -\epsilon\sigma_{k+1}}e^{\sum_{j=1}^q\omega(j)u_j}.|F_k|.\prod_{p=0}^{k}|F_p|^{\epsilon}}{\prod_{j=1}^{q}(|F(H_j)|\Pi_{p=0}^{k-1}\log(\delta/\phi_p(H_j)))^{\omega (j)}}\bigg)^{\frac{2}{\sigma_k +\epsilon\tau_k}}|dz|^2:= \lambda^2|dz|^2.
\end{equation*}
\indent We will show that $d\tau^2$ is continuous and has strictly negative curvature on $M$. \\
Indeed, it is easy to see that $d\tau$ is continuous at every point $z_0$ with $\Pi_{j=1}^qF(H_j)(z_0) \not=0.$ Now we take a point $z_0 $ such that $\Pi_{j=1}^qF(H_j)(z_0) =0.$ Hence, it follows from (C2) that we get 
\begin{align*}
\lim_{z \to z_0} e^{u_j(z)}|z-z_0|^{-\min(\nu_{F(H_j)}(z_0), k)} =\lim_{z \to z_0} e^{u_j(z) - \min(\nu_{F(H_j)}(z_0), k)\log|z-z_0|} < \infty, \forall \  1\leq j \leq q.
\end{align*}
Thus, combining this with Proposition \ref{P7}, we obtain
\begin{align*}
\nu_{d\tau}(z_0)& \geq \dfrac{1}{\sigma_k + \epsilon\tau_k}\bigg( \nu_{F_k}(z_0)-\sum_{j=1}^q\omega(j)\nu_{F(H_j)}(z_0)+\sum_{j=1}^q\omega(j)\min\{\nu_{F(H_j)}(z_0), k \}\\
& +\sum_{j=1}^q\omega(j)\nu_{e^{u_j(z)}|z-z_0|^{-\min(\nu_{F(H_j)}(z_0), k)}}(z_0) \bigg) \\ 
&\geq 0.
\end{align*}
This implies that $d\tau$ is continuous on $M.$\\
On the other hand, by using Proposition \ref{P}, Theorem \ref{PL1} and noting that $dd^c\log|F_k| =0, dd^c\log e^{u_j} \geq 0 (1\leq j\leq q)$, we have
\begin{align*}
dd^c\log\lambda&\geq \dfrac{\gamma - \epsilon\sigma_{k+1}}{\sigma_{k}+\epsilon\tau_k}dd^c\log|F|+\dfrac{\epsilon}{4(\sigma_{k}+\epsilon\tau_k)}dd^c\log(|F_0|^2\cdots|F_{k-1}|^2)\\ 
& + \dfrac{1}{2(\sigma_k +\epsilon\tau_k)}dd^c\log\dfrac{\prod_{p=0}^{k-1}|F_p|^{2(\frac{\epsilon}{2})}}{\prod_{j=1}^{q}\Pi_{p=0}^{k-1}\log^{2\omega (j)}(\delta/\phi_p(H_j))}\\
&\geq \dfrac{\epsilon}{4(\sigma_{k}+\epsilon\tau_k)}\dfrac{\tau_k}{\sigma_k}\bigg(\dfrac{|F_0|^2|F_1|^2\cdots|F_{k}|^2}{|F_0|^{2\sigma_{k+1}}}\bigg)^{1/\tau_k}dd^c|z|^2\\ 
& + C_0\bigg( \dfrac{|F_0|^{2\theta(q-2N+k-1)}|F_k|^2}{\Pi_{j=1}^q(|F(H_j)|^2\Pi_{p=0}^{k-1}\log^2(\delta/\phi_p(H_j)))^{\omega(j)}}\bigg)^{\frac{2}{k(k+1)}}dd^c|z|^2\\
&\geq \min \{\dfrac{1}{4\sigma_k(\sigma_{k}+\epsilon\tau_k)}, \dfrac{C_0}{\sigma_k} \}\bigg(\epsilon\tau_k\bigg(\dfrac{|F_0|^2|F_1|^2\cdots|F_{k}|^2}{|F_0|^{2\sigma_{k+1}}}\bigg)^{1/\tau_k}\\
&+ \sigma_k\bigg( \dfrac{|F_0|^{2\theta(q-2N+k-1)}|F_k|^2}{\Pi_{j=1}^q(|F(H_j)|^2\Pi_{p=0}^{k-1}\log^2(\delta/\phi_p(H_j)))^{\omega(j)}}\bigg)^{\frac{1}{\sigma_k}}\bigg) dd^c|z|^2
\end{align*}
where $C_0$ is the positive constant. So,  by using the basic inequality 
$$ \alpha A + \beta B \geq (\alpha + \beta) A^{\frac{\alpha}{\alpha+ \beta}}B^{\frac{\beta}{\alpha + \beta}} \text{ for all }\alpha, \beta, A, B > 0  ,$$ we can find a positive constant $C_1$ satisfing the following\\
\begin{align*}
dd^c\log\lambda &\geq C_1\bigg( \dfrac{|F|^{\theta (q-2N+k-1) -\epsilon\sigma_{k+1}}.|F_k|.\prod_{p=0}^{k}|F_p|^{\epsilon}}{\prod_{j=1}^{q}(|F(H_j)|\cdot \Pi_{p=0}^{k-1}\log(\delta/\phi_p(H_j)))^{\omega (j)}}\bigg)^{\frac{2}{\sigma_k +\epsilon\tau_k}}dd^c|z|^2\\
&= C_1\bigg( \dfrac{|F|^{\sum_{j=1}^q \omega (j) - k-1 -\epsilon\sigma_{k+1}}.|F_k|.\prod_{p=0}^{k}|F_p|^{\epsilon}}{\prod_{j=1}^{q}(|F(H_j)|\cdot \Pi_{p=0}^{k-1}\log(\delta/\phi_p(H_j)))^{\omega (j)}}\bigg)^{\frac{2}{\sigma_k +\epsilon\tau_k}}dd^c|z|^2 \  \text{ (by Theorem \ref{N1}) }\\
&= C_1\bigg( \dfrac{|F|^{\gamma -\epsilon\sigma_{k+1}}.e^{\sum_{j=1}^q\omega(j)u_j}.|F_k|.\prod_{p=0}^{k}|F_p|^{\epsilon}.\prod_{j=1}^{q}\bigg(\dfrac{|F|^{\eta_j}}{e^{u_j}}\bigg)^{\omega (j)}}{\prod_{j=1}^{q}(|F(H_j)|\cdot \Pi_{p=0}^{k-1}\log(\delta/\phi_p(H_j)))^{\omega (j)}}\bigg)^{\frac{2}{\sigma_k +\epsilon\tau_k}}dd^c|z|^2.
\end{align*}
On the other hand, using (C1) we have
\begin{equation*}
\bigg(\dfrac{|F|^{\eta_j}}{e^{u_j}}\bigg)^{\omega (j)} \geq 1 \text{ for all } j = 1, ..., q.
\end{equation*}
Thus we get
\begin{align*}
dd^c\log\lambda &\geq C_1\bigg( \dfrac{|F|^{\gamma -\epsilon\sigma_{k+1}}.e^{\sum_{j=1}^q\omega(j)u_j}.|F_k|.\prod_{p=0}^{k}|F_p|^{\epsilon}}{\prod_{j=1}^{q}(|F(H_j)|\cdot \Pi_{p=0}^{k-1}\log(\delta/\phi_p(H_j)))^{\omega (j)}}\bigg)^{\frac{2}{\sigma_k +\epsilon\tau_k}}dd^c|z|^2\\
&=C_1\lambda^2dd^c|z|^2.
\end{align*}
This concludes the proof that $d\tau^2$ has strictly negative curvature on $M.$\\
Combining with Lemma \ref{L3} we now obtain
\begin{equation*}
\bigg( \dfrac{|F|^{\gamma -\epsilon\sigma_{k+1}}e^{\sum_{j=1}^q\omega(j)u_j}.|F_k|.\prod_{p=0}^{k}|F_p|^{\epsilon}}{\prod_{j=1}^{q}(|F(H_j)|\Pi_{p=0}^{k-1}\log(\delta/\phi_p(H_j)))^{\omega (j)}}\bigg)^{\frac{1}{\sigma_k +\epsilon\tau_k}} \leq C_2\dfrac{2R}{R^2 - |z|^2}.
\end{equation*}
Then
\begin{equation*}
	 	\bigg(\dfrac{|F|^{\gamma -\epsilon\sigma_{k+1}}e^{\sum_{j=1}^q\omega(j)u_j}.|F_k|^{1+\epsilon}.\Pi_{j=1}^q\Pi_{p=0}^{k-1}|F_p(H_j)|^{\epsilon/q}}
	{\Pi_{j=1}^q|F(H_j)|^{\omega(j)}\Pi_{j=1}^{q}\Pi_{p=0}^{k-1}(\phi_p(H_j))^{\frac{\epsilon}{2q}}(\log(\delta/\phi_p(H_j)))^{\omega (j)}}\bigg)^{\frac{1}{\sigma_k +\epsilon\tau_k}}\leq C_2\dfrac{2R}{R^2 - |z|^2}.
\end{equation*}
Since the function $x^{\frac{\epsilon}{q}}\log^{\omega}(\dfrac{\delta}{x^2}) ( \omega >0, 0 < x \leq 1)$ is bounded, we obtain that 
\begin{equation*}
	\bigg(\dfrac{|F|^{\gamma -\epsilon\sigma_{k+1}}e^{\sum_{j=1}^q\omega(j)u_j}.|F_k|^{1+\epsilon}.\Pi_{j=1}^q\Pi_{p=0}^{k-1}|F_p(H_j)|^{\epsilon/q}}
	{\Pi_{j=1}^q|F(H_j)|^{\omega(j)}}\bigg)^{\frac{1}{\sigma_k +\epsilon\tau_k}}\leq C_3 \dfrac{2R}{R^2 - |z|^2}
\end{equation*}
for some positive real number $C_3$. This proves Lemma \ref{ML}.
\end{proof}
\begin{remark}
	We remark that Lemma \ref{ML} is a generalization of the main lemma in \cite{Fu90}, in which the general position condition of hyperplanes is replaced by the subgeneral position one. 
	\end{remark}
In particular, we introduce the following version for the case one dimension.
\begin{lemma}[{\cite[Main Lemma]{Fu89}}]\label{L6}
	Let $f= ( f_0: \cdots : f_k ) : \Delta_R \rightarrow \mathbb P^1 (\mathbb{C})$ be a non-degenerate holomorphic map, $ a_1, ... , a_q$ be distinct points in $ \mathbb P^1 (\mathbb{C}) .$ Assume that there are positive real number $\eta_j (1 \leq j \leq q)$ and $[-\infty, \infty)-$valued continuous subharmonic functions $u_j$ sastifying conditions (C1), (C2) and $\sum_{j=1}^q(1-\eta_j)-2 >0.$ There exists a positive constant $C$ such that
	$$\dfrac{||f||^{q-2 -\sum_{j=1}^q\eta_j -  q\delta}e^{\sum_{j=1}^qu_j}|W(f_0,f_1)|}{\Pi_{j=1}^q|F(a_j)|^{1-\delta}}\leq C\dfrac{2R}{R^2 - |z|^2}\, .$$
\end{lemma}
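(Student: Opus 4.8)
The plan is to specialize the argument of Lemma~\ref{ML} to $k=1$, $N=1$, where the hyperplanes are the distinct points $a_1,\dots,a_q\in\mathbb P^1(\mathbb C)$. Since any two distinct points span $\mathbb C^2$, all Nochka weights are $\omega(j)=1$ and the Nochka constant is $\theta=1$; moreover $\sigma_0=0,\ \sigma_1=1,\ \tau_1=1$, $|F_1|=|W(f_0,f_1)|$, and $\phi_0(a_j)=|F(a_j)|^2/||f||^2\le 1$. Fix $\epsilon$ with $0<\epsilon<\gamma:=\sum_{j=1}^q(1-\eta_j)-2$, put $\delta:=\epsilon/q$, and let $\delta_0>1$ and $C>0$ be the constants supplied by Theorem~\ref{PL1} for this $\epsilon$ and these points. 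I would introduce
$$\lambda:=\frac{||f||^{\gamma}\,e^{\sum_{j=1}^q u_j}\,|W(f_0,f_1)|}{\prod_{j=1}^q\bigl(|F(a_j)|\,\log(\delta_0/\phi_0(a_j))\bigr)},\qquad d\tau^2:=\lambda^2|dz|^2.$$
By Proposition~\ref{W}(a) the only factor that is not coordinate-free, $|W(f_0,f_1)|$, transforms by $|dz/d\xi|$, so $d\tau^2$ is intrinsically defined. The goal is to show it is a continuous pseudo-metric with strictly negative curvature and then apply Lemma~\ref{L3}.

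Continuity ($\nu_{d\tau}\ge 0$) holds trivially off $\bigcup_j f^{-1}(a_j)$, where $\log(\delta_0/\phi_0(a_j))\ge\log\delta_0>0$. At a point $\xi\in f^{-1}(a_j)$, condition~(C2) gives $e^{u_j}=O\bigl(|z-\xi|^{\min(\nu_{F(a_j)}(\xi),1)}\bigr)$, while $||f||$, $e^{u_l}$ and $|F(a_l)|$ for $l\neq j$ remain bounded and bounded away from $0$ (and $\log(\delta_0/\phi_0(a_j))\to+\infty$, which only helps); hence the vanishing order of $\lambda$ at $\xi$ is at least
$$\nu_{W(f_0,f_1)}(\xi)+\min(\nu_{F(a_j)}(\xi),1)-\nu_{F(a_j)}(\xi),$$
which is $\ge 0$ by the case $k=1$ of Proposition~\ref{P7}.

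For the curvature, $dd^cu_j\ge0$ and $dd^c\log|W(f_0,f_1)|\ge0$, and away from $\bigcup_j f^{-1}(a_j)$ the divisor terms of $\log\lambda$ drop out, so after peeling off $\epsilon\,dd^c\log||f||$ (allowed since $\gamma>\epsilon$) Theorem~\ref{PL1} for $k=1$, which reads
$$dd^c\log\frac{||f||^{2\epsilon}}{\prod_{j=1}^q\log^2(\delta_0/\phi_0(a_j))}\ \ge\ C\,\frac{||f||^{2(q-2)}\,|W(f_0,f_1)|^2}{\prod_{j=1}^q|F(a_j)|^2\log^2(\delta_0/\phi_0(a_j))}\,dd^c|z|^2,$$
gives $dd^c\log\lambda\ \ge\ \tfrac{C}{2}\,||f||^{2(q-2)}|W(f_0,f_1)|^2\bigl(\prod_j|F(a_j)|^2\log^2(\delta_0/\phi_0(a_j))\bigr)^{-1}dd^c|z|^2$, the leftover $(\gamma-\epsilon)\,dd^c\log||f||\ge0$ being discarded (for $k=1$ Proposition~\ref{P} is just the identity $dd^c\log||f||^2=|W(f_0,f_1)|^2\,||f||^{-4}\,dd^c|z|^2$). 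Now $q-2-\gamma=\sum_j\eta_j$ and $e^{u_j}\le||f||^{\eta_j}$ by~(C1), so $||f||^{2(q-2)}\bigl(||f||^{2\gamma}e^{2\sum u_j}\bigr)^{-1}=\prod_j||f||^{2\eta_j}e^{-2u_j}\ge 1$, whence the right-hand side is $\ge\tfrac{C}{2}\lambda^2\,dd^c|z|^2$. Thus $d\tau^2$ has strictly negative curvature, and Lemma~\ref{L3} yields $\lambda(z)\le C_1\,\dfrac{2R}{R^2-|z|^2}$ for a suitable constant $C_1$.

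Finally I would remove the logarithmic factors exactly as at the end of the proof of Lemma~\ref{ML}: using $||f||=|F(a_j)|\,\phi_0(a_j)^{-1/2}$, write $||f||^{\gamma}=||f||^{\gamma-\epsilon}\prod_{j}\bigl(|F(a_j)|^{\epsilon/q}\phi_0(a_j)^{-\epsilon/2q}\bigr)$, so that
$$\lambda=\frac{||f||^{\gamma-\epsilon}\,e^{\sum u_j}\,|W(f_0,f_1)|}{\prod_{j=1}^q|F(a_j)|^{1-\epsilon/q}\,\phi_0(a_j)^{\epsilon/2q}\log(\delta_0/\phi_0(a_j))};$$
since $t\mapsto t^{\epsilon/2q}\log(\delta_0/t)$ is bounded on $(0,1]$, the product $\prod_j\phi_0(a_j)^{\epsilon/2q}\log(\delta_0/\phi_0(a_j))$ is bounded, and with $\delta:=\epsilon/q$ (so $\gamma-\epsilon=q-2-\sum_j\eta_j-q\delta$) this gives the asserted bound for $||f||^{q-2-\sum_j\eta_j-q\delta}e^{\sum u_j}|W(f_0,f_1)|\big/\prod_j|F(a_j)|^{1-\delta}$. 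I expect the only delicate point to be the curvature computation — in particular, taking the exponent of $||f||$ in $\lambda$ to be exactly $\gamma$, so that~(C1) makes the ratio against the right-hand side of Theorem~\ref{PL1} bounded below — together with the continuity check at the points of $f^{-1}(a_j)$; everything else is the bookkeeping already carried out for Lemma~\ref{ML}. (Alternatively one may simply invoke Fujimoto's Main Lemma in~\cite{Fu89}, of which this is a restatement.)
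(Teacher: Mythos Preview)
Your proposal is correct and is exactly the approach implicit in the paper: the paper does not prove Lemma~\ref{L6} but cites it from \cite{Fu89} and introduces it as ``the version for the case one dimension'' of Lemma~\ref{ML}, and your argument is precisely the $k=1,\,N=1$ specialization of the proof of Lemma~\ref{ML} (with the harmless simplification of dropping the auxiliary $\prod_p|F_p|^{\epsilon}$ factors, which are superfluous when $k=1$ since Proposition~\ref{P} becomes trivial). Your final parenthetical remark---that one may alternatively just invoke \cite{Fu89}---is in fact what the paper does.
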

We also need the following result on completeness of open Riemann surfaces with conformally flat metrics due to Fujimoto :
\begin{lemma}[{\cite[Lemma 1.6.7]{Fu93}}] \label{L5}
Let $d\sigma^2$ be a conformal flat metric on an open Riemann surface $M$. Then for every point $p \in M$, there is a holomorphic and locally biholomorphic map $\Phi$ of
a disk (possibly with radius $\infty$)  $\Delta_{R_0} := \{w : |w|<R_0 \}$ $(0<R_0 \leq \infty )$ onto an open neighborhood of $p$ with $\Phi (0) = p$ such that $\Phi$ is a local isometry, namely the pull-back 
$\Phi^*(d\sigma^2)$ is equal to the standard (flat) metric on $\Delta_{R_0}$, and for some point $a_0$ with $|a_0|=1$, the $\Phi$-image of the curve 
$$L_{a_0} : w:= a_0 \cdot s \; (0 \leq s < R_0)$$
is divergent in $M$ (i.e. for any compact set $K \subset M$, there exists an $s_0<R_0$
such that the $\Phi$-image of the curve $L_{a_0} : w:= a_0 \cdot s \; (s_0 \leq s < R_0)$
does not intersect $K$).
\end{lemma}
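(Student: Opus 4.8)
The plan is to realize $\Phi$ as the maximal single-valued development of the flat metric $d\sigma^2$ out of the point $p$, and then to extract the divergent ray from the maximality of the radius. First I would use flatness locally: writing $d\sigma^2=\lambda^2|dz|^2$ near a point $q\in M$, flatness means $\log\lambda$ is harmonic, so on a small disk about $q$ one has $\lambda=|h'|$ for some holomorphic $h$ with $h'\neq 0$, and then $h$ is a biholomorphism onto a plane domain which is an isometry between $d\sigma^2$ and $|dw|^2$; call such an $h$ a flat chart at $q$. Inverting a flat chart at $p$ gives a holomorphic, locally biholomorphic, local isometry $\Phi\colon\Delta_\rho\to M$ with $\Phi(0)=p$ for some $\rho>0$. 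Read in flat charts at both ends, any holomorphic local isometry of a connected plane domain into $M$ is locally a Euclidean motion $w\mapsto\alpha w+\beta$ with $|\alpha|=1$, so the locus where two such maps have the same $1$-jet is open and closed; hence such a map is determined by its $1$-jet at one point, and the holomorphic local isometries $\Delta_r\to M$ extending the germ above are mutually compatible.

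Accordingly I would set
\[
R_0:=\sup\bigl\{r>0:\ \Phi\ \text{extends to a holomorphic local isometry}\ \Delta_r\to M\ \text{with}\ 0\mapsto p\bigr\}\in(0,\infty],
\]
and glue the extensions to a single such $\Phi\colon\Delta_{R_0}\to M$, which is automatically open. For $|a|=1$ the curve $\Phi\circ L_a$ is unit-speed, so its $d\sigma^2$-length on any subinterval equals the length of that subinterval; in particular, when $R_0<\infty$, the curve $\Phi\circ L_a$ is Cauchy for the distance induced by $d\sigma^2$ as $s\uparrow R_0$, since $d(\Phi(as),\Phi(as'))\leq|s-s'|$.

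Now suppose $R_0<\infty$ and, for contradiction, that $\Phi\circ L_a$ is non-divergent for \emph{every} $a$ with $|a|=1$. Fixing such an $a$, non-divergence yields a compact $K\subset M$ and $s_n\uparrow R_0$ with $\Phi(as_n)\in K$, so, being Cauchy with a limit point in $K$, the curve converges: $\Phi(as)\to q\in M$ as $s\uparrow R_0$. In a flat chart $\chi\colon W\to\Omega$ at $q$, the restriction of $\chi\circ\Phi$ to the component of $\Phi^{-1}(W)$ carrying the tail of $L_a$ is a holomorphic local isometry into $\mathbb C$, hence a Euclidean motion $T(w)=\alpha w+\beta$; since $T(as)=\chi(\Phi(as))\to\chi(q)\in\Omega$, we get $T(aR_0)\in\Omega$, so $\chi^{-1}\circ T$ continues $\Phi$ holomorphically and isometrically across the boundary point $aR_0$. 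Doing this at every $aR_0$ and gluing by the $1$-jet uniqueness above (compactness of $\{|a|=1\}$ makes finitely many charts suffice), $\Phi$ extends to an open set containing $\overline{\Delta_{R_0}}$, hence to some $\Delta_{R_0+\varepsilon}$, contradicting the choice of $R_0$; so some $a_0$ makes $\Phi\circ L_{a_0}$ divergent. If instead $R_0=\infty$, then $\Phi\colon\mathbb C\to M$ is a local isometry from a complete surface, hence a Riemannian covering onto $M$; thus $(M,d\sigma^2)$ is complete, and, being also non-compact, it carries a minimizing geodesic ray from $p$ (Cohn--Vossen), which is necessarily divergent and is of the form $\Phi\circ L_{a_0}$ because $d\Phi_0$ is a linear isometry of $T_0\mathbb C$ onto $T_pM$.

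The step I expect to be the main obstacle is the boundary analysis in the case $R_0<\infty$: passing from ``$\Phi\circ L_a$ non-divergent'' to ``$\Phi\circ L_a$ convergent'' genuinely uses that the ray has \emph{finite} $d\sigma^2$-length (the Cauchy argument), which is why the argument only runs cleanly once $R_0$ is known to be finite; and one must verify carefully that the local continuations manufactured at the various boundary points $aR_0$ are mutually consistent and consistent with $\Phi$, so that together they cover an honest disk $\Delta_{R_0+\varepsilon}$ rather than merely a collar of the circle $|w|=R_0$.
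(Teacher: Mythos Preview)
Your argument is correct. Note, however, that the paper itself does not prove this lemma: it is quoted verbatim from Fujimoto's monograph \cite[Lemma~1.6.7]{Fu93} and used as a black box, so there is no ``paper's proof'' to compare against beyond that citation.

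Your approach---building the maximal locally isometric developing map $\Phi:\Delta_{R_0}\to M$ out of $p$ and extracting the divergent radius from maximality---is exactly the classical construction behind this lemma (going back to the Ahlfors--Schwarz circle of ideas), and is essentially what one finds in Fujimoto's book. The key steps are all handled: local flat charts exist because $\log\lambda$ is harmonic; uniqueness by $1$-jet makes the gluing well-defined; in the finite-radius case the Cauchy argument (using only that a Cauchy sequence with a convergent subsequence converges, no completeness of $M$ needed) plus the Euclidean-motion rigidity yields the boundary extension; and your observation that small disks centered on $|w|=R_0$ have connected pairwise intersections meeting $\Delta_{R_0}$ disposes of the monodromy worry you flag at the end. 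The $R_0=\infty$ case, which in the applications of this paper never actually arises (one is always working toward a contradiction with completeness, forcing $R_0<\infty$), is also dealt with correctly via the covering argument and the existence of a minimizing ray on a complete non-compact surface.
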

Finally, we would like to show a relation between the classical defect in value distribution theory of meromorphic functions and modified defect which need for our proof of the main theorems.\\
Consider the case $M=\Delta_{s,\infty}:=\{ z \in \mathbb C \mid  s\leq |z| <\infty\}$ where $s$ is a positive real number. The order function of $f,$ the counting function for $H$ and the classical Nevanlinna defect (truncated by $n$) are defined respectively by
	\begin{align*}
		T_f(r) := \dfrac{1}{2\pi}\int\limits_{0}^{2\pi} \text{log}\Vert f(re^{i\theta}) \Vert d\theta -
		\dfrac{1}{2\pi}\int\limits_{0}^{2\pi} \text{log}\Vert f(se^{i\theta}) \Vert d\theta ,( 0< s < r).
	\end{align*}
	\begin{align*}
		N_{f, H}(r) := \int\limits_{s}^{r} \sum_{s\leq |z|\leq t} \min(\nu_{f(H)}(z), n)\dfrac{1}{t}dt
	\end{align*}
	\begin{equation*}
		\delta_f(H)^{[n]}:=1-\lim\sup_{r\rightarrow \infty}\dfrac{N_{f, H}(r)}{T_f(r)}
	\end{equation*}
	As Proposition 4.7 in \cite[page 672]{Fu83}, by using Jensen's formula, we can show that	
	\begin{equation} \label{eq1}
	\delta^H_{f,M}(H)\leq \delta^S_{f,M}(H)\leq \delta_f(H)^{[n]} \leq 1.
	\end{equation} 
The classical defect relation in value distribution theory of meromorphic functions is stated as following.
\begin{lemma}[{\cite[Corollary 3.3.9]{Fu93}}] \label{L7}
	Let $f:\Delta_{s,\infty}\to\mathbb P^n(\mathbb C)$ be a nonconstant holomorphic map where $s$ is a positive real number and let $H_1,\dots,H_q$ be distinct $q$ hyperplanes in $N-$subgeneral position. Assume that $f$ has an essential singularity at $\infty$, then
	$$\sum_{j=1}^{q}\delta_f(H_j)^{[n]}\leq 2N-n+1.$$
\end{lemma}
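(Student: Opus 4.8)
The statement is the Cartan--Nochka defect relation for a holomorphic map of an annular end, and the plan is to deduce it from the integrated Second Main Theorem, the essential singularity at $\infty$ being exactly the hypothesis that makes the error term negligible.

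First I would reduce to the case that $f$ is linearly non-degenerate: if $f(\Delta_{s,\infty})$ spans only a proper projective subspace, one passes to that subspace and argues by a standard reduction (restricting the $H_j$, grouping those whose restrictions coincide), so from now on $f=(f_0:\cdots:f_n)$ is non-degenerate with a globally defined reduced representation on $\Delta_{s,\infty}$. Fix Nochka weights $\omega(1),\dots,\omega(q)$ and a Nochka constant $\theta$ for $H_1,\dots,H_q$ as in Theorem \ref{N1}, and form the associated curves $F_p=F^{(0)}\wedge\cdots\wedge F^{(p)}$ together with the contact functions $\phi_p(H_j)$. The heart of the matter is the integrated Second Main Theorem. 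Integrating the pointwise curvature estimate of Theorem \ref{PL1} over the disks $\{|z|<r\}$, applying Jensen's (Green's) formula, using the concavity of $\log$ to handle the resulting proximity-type averages, and letting the auxiliary parameter $\epsilon\to 0$, one obtains: for all $r$ outside an exceptional set $E\subset(s,\infty)$ of finite Lebesgue measure,
\begin{equation*}
(q-2N+n-1)\,T_f(r)\ \le\ \sum_{j=1}^q N_{f,H_j}(r)\ +\ S_f(r),\qquad S_f(r)=O\!\bigl(\log^{+}T_f(r)+\log r\bigr),
\end{equation*}
where the truncation of $N_{f,H_j}$ at level $n$ on the right is produced by Proposition \ref{P7} (via $\nu_\phi+\sum_j\omega(j)\min(\nu_{(f,H_j)},n)\ge 0$, together with $\sum_j\omega(j)=n+1+\theta(q-2N+n-1)$ and $0<\omega(j)\le\theta\le 1$).

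Next I would bring in the essential singularity at $\infty$: a holomorphic map of $\Delta_{s,\infty}$ with $T_f(r)=O(\log r)$ extends holomorphically across $\infty$, so under our hypothesis $T_f(r)\to\infty$ and in fact $T_f(r)/\log r\to\infty$; since also $\log^{+}T_f(r)=o(T_f(r))$, this gives $S_f(r)/T_f(r)\to 0$ as $r\to\infty$ with $r\notin E$. Dividing the Second Main Theorem by $T_f(r)$ and passing to the limit along $r\notin E$ then yields
\begin{equation*}
q-2N+n-1\ \le\ \limsup_{r\to\infty,\ r\notin E}\ \sum_{j=1}^q\frac{N_{f,H_j}(r)}{T_f(r)}\ \le\ \sum_{j=1}^q\limsup_{r\to\infty}\frac{N_{f,H_j}(r)}{T_f(r)}\ =\ \sum_{j=1}^q\bigl(1-\delta_f(H_j)^{[n]}\bigr)=q-\sum_{j=1}^q\delta_f(H_j)^{[n]},
\end{equation*}
and rearranging gives $\sum_{j=1}^q\delta_f(H_j)^{[n]}\le 2N-n+1$, as claimed.

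The main obstacle is establishing the integrated Second Main Theorem itself with the stated error term and the correct truncation level in the $N$-subgeneral setting: this requires the logarithmic derivative lemma to control the proximity of the Wronskian term, the combinatorial input of the Nochka weights of Theorem \ref{N1} to pass from the general-position Cartan inequality to the subgeneral-position one, and careful accounting of the ramification (Wronskian) counting function so that $\sum_j N_{f,H_j}$ appears truncated at level $n$ on the right-hand side. By contrast, the non-degeneracy reduction and the final limit argument exploiting the essential singularity are comparatively routine.
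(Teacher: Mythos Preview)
The paper does not actually prove this lemma: it is quoted verbatim as \cite[Corollary 3.3.9]{Fu93} and used as a black box, so there is no ``paper's own proof'' to compare against. Your sketch is the standard Cartan--Nochka route (Second Main Theorem with Nochka weights, then divide by $T_f(r)$ and use that the essential singularity forces $\log r = o(T_f(r))$), which is essentially what one finds in the cited reference.

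Two small points worth tightening. First, the domain is the annulus $\Delta_{s,\infty}=\{s\le |z|<\infty\}$, not a disk, so ``integrating over the disks $\{|z|<r\}$'' is not literally right; one either works with the annular Nevanlinna functions as defined in the paper (circular means at radius $r$ minus those at radius $s$) and applies Green--Jensen on the annulus, or one pulls back by $z\mapsto 1/z$ to a punctured disk centered at the essential singularity. Second, the ``reduce to the non-degenerate case'' step is not completely free in the $N$-subgeneral setting: if $f$ is only $k$-non-degenerate one must check that the restricted hyperplanes are in $N$-subgeneral position in $\mathbb P^k(\mathbb C)$ and that the resulting bound $2N-k+1$ is no larger than $2N-n+1$ only when $k=n$; in fact the correct statement one proves is $\sum_j\delta_f(H_j)^{[k]}\le 2N-k+1$, which is stronger than the stated inequality since $k\le n$ and $\delta_f(H_j)^{[n]}\le\delta_f(H_j)^{[k]}$. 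These are routine adjustments, and your overall strategy is correct.
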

Using (\ref{eq1}) and Lemma \ref{L7} we have:
\begin{lemma} \label{L8}
	Let $f:\Delta_{s,\infty}\to\mathbb P^n(\mathbb C)$ be a nonconstant holomorphic map where $s$ is a positive real number and let $H_1,\dots,H_q$ be distinct $q$ hyperplanes in $N-$subgeneral position. Assume that $f$ has an essential singularity at $\infty$, then
	$$\sum_{j=1}^{q}\delta_{f,\Delta_{s,\infty}}^H(H_j)\leq 2N-n+1.$$
\end{lemma}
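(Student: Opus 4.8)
The plan is to derive Lemma~\ref{L8} as a direct consequence of the classical defect relation (Lemma~\ref{L7}) together with the inequality~(\ref{eq1}) that compares the $H$-defect with the truncated Nevanlinna defect. First I would observe that the hypotheses of Lemma~\ref{L8} are exactly those of Lemma~\ref{L7}: namely, $f:\Delta_{s,\infty}\to\mathbb P^n(\mathbb C)$ is a nonconstant holomorphic map with an essential singularity at $\infty$, and $H_1,\dots,H_q$ are $q$ hyperplanes in $N$-subgeneral position in $\mathbb P^n(\mathbb C)$. So both results are being applied on the same surface $M=\Delta_{s,\infty}$.

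The key step is to invoke~(\ref{eq1}) termwise. For each $j=1,\dots,q$ we have, by~(\ref{eq1}) applied on $M=\Delta_{s,\infty}$,
\begin{equation*}
\delta^H_{f,\Delta_{s,\infty}}(H_j)\leq \delta^S_{f,\Delta_{s,\infty}}(H_j)\leq \delta_f(H_j)^{[n]}.
\end{equation*}
Summing over $j=1,\dots,q$ gives
\begin{equation*}
\sum_{j=1}^{q}\delta^H_{f,\Delta_{s,\infty}}(H_j)\leq \sum_{j=1}^{q}\delta_f(H_j)^{[n]}.
\end{equation*}
Then applying Lemma~\ref{L7} to the right-hand side yields
\begin{equation*}
\sum_{j=1}^{q}\delta^H_{f,\Delta_{s,\infty}}(H_j)\leq 2N-n+1,
\end{equation*}
which is precisely the desired conclusion.

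There is essentially no obstacle here: the content of the lemma is entirely packaged in the two cited results, and the proof is a one-line chaining of inequalities. The only points that need a word of care are, first, making sure that the truncation level $n$ in~(\ref{eq1}) and in the counting function $N_{f,H}$ matches the dimension $n$ of the ambient projective space appearing in Lemma~\ref{L7} (they do, since both refer to maps into $\mathbb P^n(\mathbb C)$ with $n$-truncated counting), and second, confirming that~(\ref{eq1}) is genuinely valid on the non-compact surface $\Delta_{s,\infty}$ with an essential singularity at $\infty$ — but this is exactly the setting of Proposition~4.7 of~\cite{Fu83} cited just above~(\ref{eq1}). Hence the proof reduces to writing out the displayed chain of inequalities and citing Lemma~\ref{L7}.
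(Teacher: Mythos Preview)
Your proposal is correct and follows exactly the same approach as the paper: the paper simply states ``Using (\ref{eq1}) and Lemma \ref{L7} we have'' before stating Lemma \ref{L8}, with no further argument given. Your chain of inequalities $\delta^H_{f,\Delta_{s,\infty}}(H_j)\leq \delta_f(H_j)^{[n]}$ summed over $j$ and then bounded by Lemma~\ref{L7} is precisely what is intended.
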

\section{The proof of Main theorem 1}\label{s4}
\indent For the convenience of the reader, we first recall some notations on the Gauss map of minimal surfaces in $\mathbb R^m.$
Let $M$ be a complete immersed minimal surface in $\mathbb R^m.$ Take an immersion $x
=(x_0,\cdots,x_{m-1}) : M \rightarrow \mathbb R^m.$ Then $M$ has the structure of a Riemann surface and any local isothermal coordinate $(\xi_1, \xi_2)$ of $M$ gives a local holomorphic coordinate $z=\xi_1+ \sqrt{-1}\xi_2$. The generalized Gauss map of $x$ is defined to be 
\begin{equation*}
g : M \rightarrow \mathbb P^{m-1}(\mathbb C), g = \mathbb P(\dfrac{\partial x}{\partial z})=(\dfrac{\partial x_0}{\partial z}:\cdots:\dfrac{\partial x_{m-1}}{\partial z}).
\end{equation*}
Since $x:M \rightarrow \mathbb R^m$ is immersed, $$G=G_{z}:= 
(g_0,\cdots,g_{m-1}) = ((g_0)_{z},\cdots,(g_{m-1})_{z}) =(\dfrac{\partial x_0}{\partial z},\cdots,\dfrac{\partial x_{m-1}}{\partial z})$$ is a (local) reduced representation of $g$, and
since for another local holomorphic coordinate $\xi$ on $M$ we have $G_{\xi} = G_{z}\cdot (\dfrac{dz}{d\xi})$, $g$ is well defined (independently of the (local) holomorphic coordinate). 
Moreover, if $ds^2$ is the metric on $M$ induced by the standard metric on $\mathbb R^m$, we have 
\begin{equation}\label{eq:1}
ds^2 = 2|G_{z}|^2|dz|^2 .
\end{equation}
Finally since $M$ is minimal,  $g$ is a holomorphic map. 

Since by hypothesis of Main theorem 1, $g$ is $k$-non-degenerate $(1 \leq k \leq m-1).$
 Without loss of
generality, we may assume that $g(M) \subset \mathbb P^k(\mathbb C);$ then
\begin{equation*}
g : M \rightarrow \mathbb P^{k}(\mathbb C), g = \mathbb P(\dfrac{\partial x}{\partial z})=(\dfrac{\partial x_0}{\partial z}:\cdots:\dfrac{\partial x_{k}}{\partial z})
\end{equation*}
is linearly non-degenerate in $\mathbb P^k(\mathbb C)$ (so in particular $g$ is not constant)  and the other facts mentioned above still hold.

Let $H_j(j = 1,\cdots,q)$ be $q (\geq N+1)$ hyperplanes in $\mathbb P^{m-1}(\mathbb C)$ in $N$-subgeneral position $(N\geq m-1 \geq k)$. Then  $H_j\cap \mathbb P^{k}(\mathbb C)(j = 1,\cdots,q)$ are $q$ hyperplanes in $\mathbb P^{k}(\mathbb C)$ in $N$-subgeneral position. Let each $H_j\cap \mathbb P^{k}(\mathbb C)$ be represented  as
\begin{equation*}
H_j\cap \mathbb P^{k}(\mathbb C) :\overline{c}_{j0}\omega_0+\cdots+\overline{c}_{j{k}}\omega_{k}=0
\end{equation*}
with $\sum_{i=0}^k|\overline{c}_{ji}|^2 = 1.$\\
Set
\begin{equation*}
G(H_j) = G_{z}(H_j):= \overline{c}_{j0}g_0+\cdots+\overline{c}_{jk}g_{k}.
\end{equation*}

We will now, for each contact function $\phi_p(H_j)$ of $g$ for each a hyperplane $H_j$, choose one of the components of the numerator $|((G_z)_p)_z(H_j)|$ which is not identically zero: More precisely, for each $j,p\ (1\leq j \leq q, 1\leq p \leq k),$ we can choose $i_1,...,i_p$ with $0\leq i_1<\cdots< i_p \leq k$ such that
\begin{equation*}
\psi(G)_{jp}=(\psi (G_z)_{jp})_{z}:=\sum_{l\neq i_1,..,i_p}\overline{c}_{jl}W_{z}(g_l,g_{i_1},\cdots,g_{i_p})\not\equiv 0,
\end{equation*}
(indeed, otherwise, we have 
 $\sum_{l\neq i_1,..,i_p}\overline{c}_{jl}W(g_l,g_{i_1},\cdots,g_{i_p})\equiv 0$ for all $i_1, ..., i_p$,
so \\
 $W(\sum_{l\neq i_1,..,i_p}\overline{c}_{jl}g_l,g_{i_1},\cdots,g_{i_p})\equiv 0$ for all $i_1, ..., i_p$, which contradicts the non-degeneracy of $g$ in $\mathbb P^k(\mathbb C).$
Alternatively we simply can observe that in our situation none of the contact functions vanishes identically). We still set $\psi (G)_{j0}=\psi (G_z)_{j0}:=G(H_j)(\not\equiv 0)$, and we also note that $\psi (G)_{jk}=((G_z)_k)_z$. Since the $\psi (G)_{jp}$ are holomorphic, so they have only isolated zeros. 

Finally we put for later use the transformation formulas for all the terms defined above,
which are obtained by using Proposition \ref{W} :
 For local holomorphic coordinates $z$ and $\xi$ on $M$ we have : 
 \begin{equation}\label{e1}
 G_{\xi} = G_{z}\cdot (\dfrac{dz}{d\xi})\,,
 \end{equation}
  \begin{equation}\label{e2}
 G_{\xi}(H) = G_{z}(H) \cdot (\dfrac{dz}{d\xi})\,,
 \end{equation}
   \begin{equation}\label{e3}
  ((G_{\xi})_k)_{\xi}=((G_z)_k)_{z}\cdot (\dfrac{dz}{d\xi})^{k+1+\frac{k(k+1)}{2}}=((G_z)_k)_{z}(\dfrac{dz}{d\xi})^{\sigma_{k+1}}\,,
  \end{equation}
   \begin{equation}\label{e4}
  (\psi (G_{\xi})_{jp})_{\xi}=(\psi (G_z)_{jp})_{z}\cdot (\dfrac{dz}{d\xi})^{p+1+\frac{p(p+1)}{2}}=(\psi (G_z)_{jp})_{z} \cdot (\dfrac{dz}{d\xi})^{\sigma_{p+1}}\,, \:(0 \leq p \leq k)\,.
  \end{equation} 
Moreover, we also will need the following transformation formulas for mixed variables :
   \begin{equation}\label{e5}
  ((G_{\xi})_k)_{\xi}=((G_{\xi})_k)_{z}\cdot (\dfrac{dz}{d\xi})^{\frac{k(k+1)}{2}}=((G_{\xi})_k)_{z}(\dfrac{dz}{d\xi})^{\sigma_{k}}\,,
  \end{equation}
   \begin{equation}\label{e6}
  (\psi (G_{\xi})_{jp})_{\xi}=(\psi (G_{\xi})_{jp})_{z}\cdot (\dfrac{dz}{d\xi})^{\frac{p(p+1)}{2}}=(\psi (G_{\xi})_{jp})_{z} \cdot (\dfrac{dz}{d\xi})^{\sigma_{p}}\,, \:(0 \leq p \leq k)\,.
  \end{equation} 

Now, it follows from hypothesis of Main theorem 1 that
\begin{equation}\label{eq:2}
	\sum_{j=1}^q\delta_{g, A}^H(H_j) > (k+1)(N-\dfrac{k}{2})+(N+1).  
\end{equation} 
By definition, there exist real numbers $\eta_j \geq 0 (1\leq j \leq q)$ such that 
$$q- \sum_{j=1}^q\eta_j > (k+1)(N-\dfrac{k}{2})+(N+1)$$ 
and continuous subharmonic functions $u_j (1\leq j \leq q)$ on $A,$ which are harmonic on $A - g^{-1}(H_j),$ satisfying conditions (C1) and (C2). Thus
\begin{equation}\label{eq:2'}
	\sum_{j=1}^{q}(1- \eta_j)-2N+k-1 > \dfrac{(2N-k +1)k}{2}>0,
\end{equation}
and this implies in particular
\begin{equation} \label{ass1''}
	q>2N-k+1 \geq N+1 \geq k+1 .
\end{equation}
 Since the theorem \ref{N1}, we have
\begin{align}\label{ass1''}
(q - 2N + k - 1)\theta = \sum_{j=1}^q \omega (j) - k - 1\,; \theta \geq \omega(j) > 0 \text{ and }  \theta \geq \dfrac{k + 1}{2N - k + 1}.
\end{align}
So, using (\ref{ass1''}), we get
\begin{align*}
2\bigg ((\sum_{j=1}^q \omega(j)(1-\eta_j)) - k - 1\bigg )&= \dfrac{2((\sum_{j=1}^{q}\omega(j))- k - 1)\theta}{\theta} - 2\sum_{j=1}^{q}\dfrac{\omega(j)\theta \eta_j}{\theta}\\
&= 2(q-2N + k - 1)\theta - 2\sum_{j=1}^{q}\dfrac{\omega(j)\theta \eta_j}{\theta}\\
&\geq 2(q-2N + k - 1)\theta - 2\sum_{j=1}^{q}\theta\eta_j\\
&= 2\theta\bigg (\sum_{j=1}^{q}(1- \eta_j)-2N+k-1\bigg )\\
&\geq 2\dfrac{(k + 1)\bigg (\sum_{j=1}^{q}(1- \eta_j)-2N+k-1\bigg )}{2N-k+1}.
\end{align*}
Thus, we now can  conclude with (\ref{eq:2'}) that 
\begin{align}\label{eq:3}
&2\bigg (\sum_{j=1}^q \omega (j)(1-\eta_j) - k - 1\bigg ) > k(k+1) \nonumber \\
&\Rightarrow \sum_{j=1}^q \omega (j)(1-\eta_j) - k - 1 - \dfrac{k(k+1)}{2} >0.
\end{align}
By (\ref{eq:3}), we can choose a number $\epsilon(>0) \in \mathbb Q $ such that 
\begin{equation*}
	\dfrac{\sum_{j=1}^q \omega(j)(1-\eta_j) - (k + 1) - \frac{k(k + 1)}{2}}{\tau_{k+1}} >  \epsilon > \dfrac{\sum_{j=1}^q \omega (j)(1-\eta_j) - (k + 1) - \frac{k(k + 1)}{2}}{\frac{1}{q} + \tau_{k+1} }.
\end{equation*}
So
\begin{equation} \label{eq:5}
h:=\sum_{j=1}^q \omega(j)(1-\eta_j) - (k + 1) - \epsilon\sigma_{k+1} > \frac{k(k + 1)}{2} + \epsilon\tau_{k}
\end{equation}
and
\begin{equation}\label{eq:6} 
\dfrac{\epsilon}{q}> \sum_{j=1}^q \omega(j)(1-\eta_j) - (k + 1) - \frac{k(k + 1)}{2} - \epsilon\tau_{k+1}.
\end{equation}
We now consider the number
\begin{equation}\label{nb}
\rho := \dfrac{1}{h} \bigg ( \frac{k(k + 1)}{2} + \epsilon\tau_k\bigg)= \dfrac{1}{h} \bigg ( \sigma_k + \epsilon\tau_k\bigg).
\end{equation}
Then, by (\ref{eq:5}), we have
 \begin{equation}\label{eq:7}
0 < \rho < 1.
\end{equation}
Set
\begin{equation}\label{eq:8}
\rho^* := \dfrac{1}{(1 - \rho)h}=\dfrac{1}{\sum_{j=1}^q \omega(j)(1-\eta_j) - (k + 1) - \frac{k(k + 1)}{2} - \epsilon\tau_{k+1}}.
\end{equation}
Using (\ref{eq:6}) we get 
\begin{equation}\label{eq:9}
\dfrac{\epsilon\rho^*}{q} > 1.
\end{equation}

Now, we put 
$$A_1=\{z\in A: \psi (G)_{jp}(z)\not=0 \text{ for all } j=1,\cdots, q \text{ and } p=0,\cdots, k\}.$$
We define a new pseudo metric
\begin{equation}\label{eq:10}
d\tau^2 
= \bigg(\dfrac{\Pi_{j=1}^q|G_{z}(H_j)|^{\omega(j)}}{|((G_z)_k)_{z}|^{1+\epsilon}e^{\sum_{j=1}^{q}\omega (j) u_j}\Pi_{j=1}^q\Pi_{p=0}^{k-1}|(\psi(G_z)_{jp})_{z}|^{\epsilon/q}}\bigg)^{2\rho^*}|dz|^2
\end{equation}
on $A_1.$
We note that by the transformation formulas (\ref{e1}) to (\ref{e4}) 
for a local holomorphic coordinate $\xi$ we have 
\begin{equation}\label{ind1}
\bigg(\dfrac{\Pi_{j=1}^q|G_{z}(H_j)|^{\omega(j)}}{|((G_z)_k)_{z}|^{1+\epsilon}e^{\sum_{j=1}^{q}\omega(j) u_j}\Pi_{j=1}^q\Pi_{p=0}^{k-1}|(\psi (G_z)_{jp})_{z}|^{\epsilon/q}}\bigg)^{2\rho^*}|dz|^2 
\end{equation}
\begin{equation*}
= 
\bigg(\dfrac{\Pi_{j=1}^q|G_{\xi}(H_j)|^{\omega(j)}}{|((G_{\xi})_k)_{\xi}|^{1+\epsilon}e^{\sum_{j=1}^{q}\omega (j) u_j}\Pi_{j=1}^q\Pi_{p=0}^{k-1}|(\psi(G_{\xi})_{jp})_{\xi}|^{\epsilon/q}}\bigg)^{2\rho^*}|d\xi|^2
\end{equation*}
so the pseudo metric $d\tau$ is in fact defined independently of the choice of the coordinate.  

{\it Claim 1: $d\tau$ is continuous and nowhere vanishing on $A_1.$}\\
Indeed, for $z_0 \in A_1$ with $\Pi_{j=1}^qG(H_j)(z_0) \not= 0,$ $d\tau$ is continuous and not vanishing at $z_0.$
Now assume that there exists $z_0 \in A_1$ such that $G(H_i)(z_0)=0$ for some $i.$ Consider the function 
$$ \Gamma (z) =\dfrac{|((G_z)_k)_{z}|^{1+\epsilon}e^{\sum_{j=1}^{q}\omega (j) u_j}\Pi_{j=1}^q\Pi_{p=0}^{k-1}|(\psi(G_z)_{jp})_{z}|^{\epsilon/q}}{\Pi_{j=1}^q|G_{z}(H_j)|^{\omega(j)}} .$$
Combining this with Proposition \ref{P7}, we obtain
\begin{align*}
	\nu_{\Gamma}(z_0)& \geq  \nu_{G_k}(z_0)-\sum_{j=1}^q\omega(j)\nu_{G(H_j)}(z_0)+\sum_{j=1}^q\omega(j)\min\{\nu_{G(H_j)}(z_0), k \}\\
	& +\sum_{j=1}^q\omega(j)\nu_{e^{u_j(z)}|z-z_0|^{-\min(\nu_{G(H_j)}(z_0), k)}}(z_0)  \\ 
	&\geq 0.
\end{align*}
This contradicts to $z_0 \in A_1$. Claim 1 is proved.\\

It is easy to see that $d\tau$ is flat on $A_1$ by definition of $u_j(1\leq j \leq q).$ So, it can be smoothly extended over $K$. Thus, we have a metric, still call it $d\tau,$ on $$A'_1=A_1\cup K$$ 
that is flat outside the compact set $K$. 

{\it Claim 2: $d\tau$ is complete on $A'_1.$ }\\
 We proceed by contradition. If $A'_1$ isn't complete, there is a divergent curve $\gamma(t)$ on $A'_1$ with finite length. We may assume that there is a positive distance $d$ between curve $\gamma$ and the compact $K$. Therefore $\gamma: [0,1)\to A_1$ and $\gamma$ divergent on $A'_1$, with finite length. It implies that from the point of view of $M$, there are two caces: either $\gamma(t)$ tends to a point $z_0$ with 
$$\Pi_{p=0}^{k}\Pi_{j=1}^q|\psi (G)_{jp}|(z_0) = 0.$$
($\gamma(t)$ tends to the boundary of $A'_1$ as $t\to 1$) or else $\gamma(t)$ tends to the boundary of $M$ as $t\to 1.$ 

For the former case, by (\ref{eq:10}) and Proposition \ref{P7} we have 
\begin{align*}
\nu_{d\tau}(z_0)&= - \bigg(  \nu_{G_k}(z_0)-\sum_{j=1}^q\omega(j)\nu_{G(H_j)}(z_0)+\sum_{j=1}^q\omega(j)\min\{\nu_{G(H_j)}(z_0), k \}\\
& +\sum_{j=1}^q\omega(j)\nu_{e^{u_j(z)}|z-z_0|^{-\min(\nu_{G(H_j)}(z_0), k)}}(z_0)+ (\epsilon \nu_{G_k}(z_0) + \dfrac{\epsilon}{q}\sum_{j=1}^q \sum_{p=0}^{k-1}\nu_{\psi (G)_{jp}}(z_0))\bigg)\rho^*\\ 
&\leq -\epsilon \rho^*\nu_{G_k}(z_0) -\dfrac{\epsilon\rho^*}{q}\sum_{j=1}^q \sum_{p=0}^{k-1}\nu_{\psi (G)_{jp}}(z_0)\leq -\dfrac{\epsilon\rho^*}{q}.
\end{align*}
Thus we can find a positive constant $C$ such that 
\begin{equation*}
|d\tau| \geq \dfrac{C}{ |z-z_0|^{\frac{\epsilon\rho^*}{q}}}|dz|
\end{equation*}
in a neighborhood of $z_0$ and then, combining with (\ref{eq:9}), we thus have $$\int_0^1d\tau =\infty$$ contradicting the finite length of $\gamma$. Therefore the last case occur, that is $\gamma(t)$ tends to the boundary of $M$ as $t\to 1.$ 

 Choose $t_0$ such that $$\int_{t_0}^1d\tau <{d}/3.$$ We consider a small disk $\Delta$ with center at $\gamma (t_0).$ Since $d\tau$ is flat, by Lemma \ref{L5}, $\Delta$ is isometric to an ordinary disk in the plane. Let $\Phi: \{|w| < \eta \}\rightarrow \Delta$ be this isometry. Extend $\Phi,$ as a local isometry into $A_1,$ to the largest disk $\{|w| < R\} = \Delta_R$ possible. Then $R \leq {d}/3.$ Hence, the image under $\Phi$ be bounded away from $K$ by distance at least ${2d}/3.$ The reason that $\Phi$ cannot be extended to a larger disk is that the image goes to the outside boundary $A'_1$ (it cannot go to points of $A'_1$ with $\Pi_{p=0}^{k}\Pi_{j=1}^q|\psi (G)_{jp}|(z_0) = 0$ since we have shown already to be infinitely far away in the metric with respect to these points). 
More precisely, by again Lemma \ref{L5}, there exists a point $w_0$ with $|w_0| =R$ so that $\Phi(\overline{0,w_0}) = \Gamma_0$ is a divergent curve on $M.$ 

We now want to use Lemma \ref{ML} to finish up Claim 2 by showing that $\Gamma_0$ has finite length in the original $ds^2$ on $M$, contradicting the completeness of the $M$. For the rest of the proof of Claim 2
we consider $G_z=((g_0)_z,...,(g_k)_z)$ as a {\it fixed globally defined reduced representation of $g$} by means of the global coordinate $z$ of $A \supset A_1$.
(We remark that then we loose of course the invariance of $d\tau^2$ under coordinate changes (\ref{ind1}), but since $z$ is a global coordinate this will be no problem 
and we will not need this invariance
for the application of Lemma \ref{ML}.) If again 
$\Phi : \{w: |w| < R\} \rightarrow A_1$ is our maximal local isometry, it is in particular holomorphic and locally biholomorphic. So $f:= g \circ \Phi :  \{w: |w| < R\} \rightarrow
\mathbb P^k (\mathbb C)$ is a linearly non-degenerate holomorphic map with fixed global
reduced representation $$F:= G_z \circ \Phi =((g_0)_z  \circ \Phi,\cdots,(g_k)_z  \circ \Phi)
=(f_0,\cdots,f_k)\,.$$ 
Since $\Phi$ is locally biholomorphic, the metric on $\Delta_R$ induced from $ds^2$ (cf. (\ref{eq:1})) through $\Phi$ is given by
\begin{equation}\label{eq:12}
\Phi^*ds^2 =  2|G_z \circ \Phi|^2|\Phi^* dz|^2 = 
2|F|^2 |\frac{dz}{dw}|^2|dw|^2\,.
\end{equation}
On the other hand, $\Phi$ is locally isometric, so we have
\begin{equation*}
|dw| = |\Phi^*d\tau|= \bigg(\dfrac{\Pi_{j=1}^q|G_z(H_j) \circ \Phi|^{\omega(j)}}{|((G_z)_k)_z \circ \Phi|^{1+\epsilon}e^{\sum_{j=1}^{q}\omega (j) {u_j}\circ\Phi}\Pi_{p=0}^{k-1}\Pi_{j=1}^q|(\psi (G_z)_{jp})_z \circ \Phi|^{\epsilon/q}}\bigg)^{\rho^*}|\frac{dz}{dw}||dw|\,.
\end{equation*}
By (\ref{e5}) and (\ref{e6}) we have
$$
 ((G_z)_k)_z \circ \Phi =((G_z \circ \Phi)_k)_{w}(\dfrac{dw}{dz})^{\sigma_{k}}
 =(F_k)_{w}(\dfrac{dw}{dz})^{\sigma_{k}}\,,$$
 $$
  (\psi (G_{z})_{jp})_{z} \circ \Phi =(\psi (G_{z} \circ \Phi)_{jp})_{w} \cdot (\dfrac{dw}{dz})^{\sigma_{p}}=(\psi (F)_{jp})_{w} \cdot (\dfrac{dw}{dz})^{\sigma_{p}}\,, \:(0 \leq p \leq k)\,\,.
$$
Hence, by definition of $\rho$ in (\ref{nb}), we have
\begin{align*}
|\dfrac{dw}{dz}|&= 
\bigg(\dfrac{\Pi_{j=1}^q|G_z(H_j) \circ \Phi|^{\omega(j)}}{|((G_z)_k)_z \circ \Phi|^{1+\epsilon}e^{\sum_{j=1}^{q}\omega (j) {u_j}\circ\Phi}\Pi_{p=0}^{k-1}\Pi_{j=1}^q|(\psi (G_z)_{jp})_z \circ \Phi|^{\epsilon/q}}\bigg)^{\rho^*}
\\
&= \bigg(\dfrac{\Pi_{j=1}^q|F(H_j)|^{\omega(j)}}{|(F_k)_w|^{1+\epsilon}e^{\sum_{j=1}^{q}\omega (j) {u_j}\circ\Phi}\Pi_{p=0}^{k-1}\Pi_{j=1}^q|(\psi (F)_{jp})_w|^{\epsilon/q}}\bigg)^{\rho^*}\dfrac{1}{|\frac{dw}{dz}|^{h\rho\rho^*}}.
\end{align*}
So by the definition of $\rho^*$ in (\ref{eq:8}), we get
\begin{align*}
|\dfrac{dz}{dw}|&= 
\bigg(
\dfrac{|(F_k)_w|^{1+\epsilon}e^{\sum_{j=1}^{q}\omega (j) {u_j}\circ\Phi}\Pi_{p=0}^{k-1}\Pi_{j=1}^q|(\psi (F)_{jp})_w|^{\epsilon/q}}
{\Pi_{j=1}^q|F(H_j)|^{\omega(j)}}
\bigg)^{\frac{\rho^*}{1+h\rho\rho^*}}\\
&= 
\bigg(
\dfrac{|(F_k)_w|^{1+\epsilon}e^{\sum_{j=1}^{q}\omega (j) {u_j}\circ\Phi}\Pi_{p=0}^{k-1}\Pi_{j=1}^q|(\psi (F)_{jp})_w|^{\epsilon/q}}
{\Pi_{j=1}^q|F(H_j)|^{\omega(j)}}
\bigg)^{\frac{1}{h}} \ .
\end{align*}
Moreover, $|(\psi (F)_{jp})_w| \leq |(F_p)_w(H_j)|$ by the definitions, so we obtain
\begin{equation}\label{eq:13}
|\dfrac{dz}{dw}| \leq  
\bigg(\dfrac{|(F_k)_w|^{1+\epsilon}e^{\sum_{j=1}^{q}\omega (j) {u_j}\circ\Phi}\Pi_{p=0}^{k-1}\Pi_{j=1}^q|(F_p)_w(H_j)|^{\epsilon/q}}
{\Pi_{j=1}^q|F(H_j)|^{\omega(j)}}\bigg)^{\frac{1}{h}} \ .
\end{equation}
By (\ref{eq:12}) and (\ref{eq:13}), we have
\begin{equation*}
\Phi^*ds \leqslant \sqrt{2}|F|
\bigg(\dfrac{|(F_k)_w|^{1+\epsilon}e^{\sum_{j=1}^{q}\omega (j) {u_j}\circ\Phi}\Pi_{p=0}^{k-1}\Pi_{j=1}^q|(F_p)_w(H_j)|^{\epsilon/q}}
{\Pi_{j=1}^q|F(H_j)|^{\omega(j)}}\bigg)^{\frac{1}{h}}
|dw|.
\end{equation*}
Since (\ref{eq:5}) all the conditions of Lemma \ref{ML} are satisfied. So we obtain the following from Lemma \ref{ML} :
\begin{equation*}
\Phi^*ds \leqslant C(\dfrac{2R}{R^2 -|w|^2})^{\rho}|dw|\,
\end{equation*}
for some constant $C.$ It follows from (\ref{eq:7}) that $0 < \rho < 1.$ Then  
\begin{equation*}
d_{\Gamma_0} \leqslant \int_{\Gamma_0}ds = \int_{\overline{0,w_0}}\Phi^*ds \leqslant C \cdot \int_0^R(\dfrac{2R}{R^2 -|w|^2})^{\rho}|dw|  < + \infty,
\end{equation*}
where $d_{\Gamma_0}$ denotes the length of the divergent curve $\Gamma_0$ in $M,$ contradicting the assumption of completeness of $M.$  Thus, we complete Claim 2. 

Since the metric on $A'_1$ is flat outside of a compact set $K$ and Claim 2, by a theorem of Huber (\cite{Hub}, Theorem 13, p.61) the fact that $A'_1$ has finite total curvature implies that $A'_1$ is finitely connected. This means that there is a compact subregion of $A'_1$ whose complement is the union of a finite number of doubly-connected regions. Thus, we can first conclude that $\Pi_{p=0}^{k}\Pi_{j=1}^q|\psi (G)_{jp}|(z)$ can have only a finite number of zeros, and second, that the original surface $M$ is finitely connected. Furthermore, by Osserman (\cite{O63}, Theorem 2.1) each annular ends of $A'_1$, hence of $M,$ is conformally equivalent to a punctured disk. Thus, the Riemann surface $M$ must be conformally equivalent to a compact Riemann surface $\overline{M}$ with a finite number of points removed. In a neighborhood of each of those points the Gauss map $G$ satisfies 
$$\sum_{j=1}^q\delta_G^H(H_j)> (k+1)(N-\dfrac{k}{2})+(N+1)>2N-k+1.$$ 
Now by Lemma \ref{L8}, the Gauss map $G$ is not essential at those points. Therefore $G$ can be extended to a holomorphic map from $\overline{M}$ to $\mathbb P^k(\mathbb C)$. If the homology class represented by the image of $G:\overline{M}\to\mathbb P^k(\mathbb C)$ is $l$ times the fundamental homology class of $\mathbb P^k(\mathbb C),$ then we have $$\iint KdA=-2\pi l$$ as the total curvature of $M$. This proves Main theorem 1. 
\section{The proof of  Main theorem 2}\label{s5}
\indent For convenience of the reader, we first recall some notations on the Gauss map of minimal surfaces in $\mathbb R^3.$
\indent Let $x =( x_1, x_2, x_3) : M \rightarrow \mathbb R^3$ be a non-flat complete minimal surface and $g: M \rightarrow  \mathbb P^1(\mathbb C)$ its Gauss map.  Let $z$ be a local holomorphic coordinate. Set $\phi_i := \partial x_i / \partial z \ (i = 1, 2, 3 )$ and $\widehat{\phi}:= \phi_1-\sqrt{-1}\phi_2.$ Then, the (classical) Gauss map $g: M \rightarrow \mathbb P^1(\mathbb C)$ is given by  $$g=\dfrac{\phi_3}{\phi_1 - \sqrt{-1}\phi_2},$$
and the metric on $M$ induced from $\mathbb R^3$ is given by
$$ds^2= |\widehat{\phi}|^2(1 + |g|^2)^2|dz|^2  \text{ (see \cite{Fu93})}.$$
We remark that although the $\phi_i$, $(i=1,2,3)$ and $\widehat{\phi}$ depend on $z$, $g$ and $ds^2$ do not.
Next we take a reduced representation $g = (g_0 : g_1)$ on $M$ and set $||g|| = (|g_0|^2 +|g_1|^2)^{1/2}.$ Then we can rewrite 
\begin{equation}
ds^2 = |h|^2||g||^4|dz|^2\,, 
\end{equation}
where $h:= \widehat{\phi}/g_0^2.$ In particular, $h$ is a holomorphic map without zeros. We remark that  $h$ depends on $z$, however, the reduced representation $g=(g_0:g_1)$ is globally defined on $M$ and independent of $z$.
Finally we observe that by the assumption that $M$ is not flat,  $g$ is not constant.

Now the proof of Main theorem 2 will be completely analogue to the proof of Main theorem 1.

Firstly, for each $a^j\:( 1\leq j \leq q )$  be distinct points in $\mathbb P^1(\mathbb C)$, we may assume $a^j=(a^j_0: a^j_1 )$ with $|a^j_0|^2 + |a^j_1|^2 = 1$ $( 1\leq j\leq q )$. We set
$G_{j}:=a^j_0g_1 - a^j_1g_0 \ (1\leq j \leq q)$ for the reduced representation $g = (g_0 : g_1)$ of the Gauss map. 
 
Secondly, it follows from hypothesis of  Main theorem 2 that
\begin{equation}\label{eq:2}
\sum_{j=1}^q\delta_{g, A}^H(a_j) > 4.  
\end{equation} 
By definition, there exist real numbers $\eta_j \geq 0 (1\leq j \leq q)$ such that 
$$q- \sum_{j=1}^q\eta_j > 4$$ 
and continuous subharmonic functions $u_j (1\leq j \leq q)$ on $A,$ which are harmonic on $A - g^{-1}(a_j),$ satisfying conditions (C1) and (C2). Thus we can take $\delta$ with
$$ \dfrac{q -4-\sum_{j=1}^{q}\eta_j}{q} > \delta > \dfrac{q -4-\sum_{j=1}^{q}\eta_j}{q +2},$$
and set $p = 2/ (q -2-\sum_{j=1}^{q}\eta_j-q\delta).$ Then 
\begin{equation} \label{3.4.1}0 < p < 1 , \ \frac{p}{1-p} > \frac{\delta p}{1-p} > 1 \ . 
\end{equation} 
For convenience, we will use again some notations as in the proof of Main theorem 1.\\
Put $A_1=\{z\in A: W(g_0,g_1)(z)\not=0 \text{ for all } j=1,\cdots, q\}.$\\
 We define a new pseudo metric
$$d\tau^2 = |h|^{\frac{2}{1-p}}\bigg(\dfrac{\Pi_{j=1}^{q}|G_{j}|^{1-\delta}}{|W(g_0,g_1)|e^{\sum_{j=1}^qu_j}}\bigg)^{\frac{2p}{1-p}}|dz|^2 \ $$
on $A_1$ (where again $G_{j} := a^j_0g_1 - a^j_1g_0$ and $h$ is defined with respect
to the coordinate $z$ on $A_1$ and $W(g_0,g_1) = W_z(g_0,g_1)$).

 For last steps, we argue similarly to the proof of Main theorem 1. Thus  we finished the proof of Main theorem 2.

\vspace{1cm}

\end{document}